\def\b0{\boldsymbol{0}}
\def\bu{\boldsymbol{u}}
\def\wh{\widehat}
\def\bbA{\mathbb{A}}
\def\bbB{\mathbb{B}}
\def\bbC{\mathbb{C}}
\def\bbD{\mathbb{D}}
\newtheorem{sch}{Scheme}[section]
\newtheorem{thm}{Theorem}[section]
\newtheorem{lem}{Lemma}[section]
\newtheorem{rem}{Remark}[section]
\begin{document}
	\title{Original-energy-dissipation-preserving methods for the incompressible Navier-Stokes equations}
	
	\author[Weng ZH. et.~al.]{Zihan Weng,
		Qi Hong, Chunwu Wang,
		Yuezheng Gong\comma\corrauth}
	\address{School of Mathematics, Nanjing University of Aeronautics and Astronautics, Key Laboratory of Mathematical Modelling and High Performance Computing of Air Vehicles (NUAA), MIIT, Nanjing, Jiangsu  211106, China}
	\email{{\tt gongyuezheng@nuaa.edu.cn} (Y.~Gong)
	}

	\begin{abstract}
		This paper introduces a robust reformulation of the incompressible Navier-Stokes equations, establishing a foundational framework for designing efficient, structure-preserving algorithms that strictly conserve the original energy dissipation law. By leveraging Crank-Nicolson schemes and backward differentiation formulas, we develop four first- and second-order time-discrete schemes. These schemes exactly preserve the original energy dissipation law at each time step, requiring only the solutions of three linear Stokes systems and one $2\times 2$ system of linear equations. Furthermore, the finite difference approximation on a staggered grid is employed for these time-discrete systems to derive fully discrete structure-preserving schemes. We rigorously prove that all proposed fully discrete methods both maintain the original energy dissipation law and admit unique solutions.  Moreover, we present their efficient implementation. Extensive numerical experiments are carried out to verify the accuracy, efficacy, and advantageous performance of our newly developed methods.
	\end{abstract}

	
	\keywords{Incompressible Navier-Stokes equations; Robust reformulation; Original energy dissipation law; Structure-preserving algorithms; Staggered girds}
	
	\maketitle
	\section{Introduction}
	\label{sec1} 
	As fundamental equations in computational fluid dynamics, the incompressible Navier-Stokes (NS) equations describe the dynamics of incompressible fluid flows. In this paper, we focus on their dimensionless formulation:
	\begin{equation}\label{incompressibleNSEs}
		\begin{cases}
			\bm{u}_t - \nu \Delta \bm{u} + \bm{u} \cdot \nabla \bm{u} + \nabla p = \bm{f},& \text{in} ~\Omega \times (0,T],\\
			\nabla \cdot \bm{u} = 0,& \text{in} ~\Omega \times (0,T],
		\end{cases}
	\end{equation}
	subject to an initial condition $\bm{u}(\bm{x},0) = \bm{u}_0(\bm{x})$ and either homogeneous Dirichlet or periodic boundary conditions within a rectangular domain $\Omega\subset \mathbb{R}^d~(d=2,3)$.  Here, the primary unknowns are the velocity field $\bm{u}$ and pressure $p$, with the latter constrained by its zero mean value to ensure uniqueness. The term $\bm{f}$ denotes external body forces, while $\nu>0$ represents the kinematic viscosity coefficient which is inversely proportional to the Reynolds number $Re$. It is well known that, in the absence of external force ($\bm{f} = \bm{0}$), the incompressible NS equations \eqref{incompressibleNSEs} under the prescribed boundary conditions satisfy the energy dissipation law
	\begin{equation}\label{EDLf}
		\frac{\mathrm{d}}{\mathrm{d}t} \int_\Omega \frac{1}{2} |\bm{u}|^2 \mathrm{d} \bm{x} = - \nu \int_{\Omega}|\nabla \bm{u}|^2 \mathrm{d} \bm{x}.
	\end{equation}
	Consequently, developing numerical schemes that preserve this essential physical law constitutes a major objective in computational methods for the incompressible NS equations.
	
	A large number of numerical algorithms have been developed to solve the incompressible NS equations \cite{deville2002high,gunzburger2012finite,temam2024navier,peyret2002spectral}. Among these, the projection method---also known as the fractional step method---stands out as a widely adopted approach since its introduction by Chorin \cite{ChorinNumerical1968} and Temam \cite{TemamARMA1969}, owing to its simplicity and efficiency \cite{KIM1985JCP,GUERMONDCMAME2006,ESIAM1995,EIJNM2000,TIMMERMANS1996}. However, pursuing extreme computational efficiency by applying explicit treatment to the convective term introduces stringent stability constraints on the time step, which compromise both the efficacy and robustness of the scheme \cite{Li2021MC}. Notably, it remains entirely feasible to construct unconditionally stable schemes within the projection framework, as exemplified by the scheme (5.1)-(5.2) in \cite{Shen1992SINUM}. Nevertheless, achieving such stability necessitates semi-implicit discretization of the convective term, requiring the solution of a system of linear algebraic equations with a variable and time-dependent coefficient matrix. Furthermore, we emphasize that these projection schemes do not conserve the discrete energy dissipation law \eqref{EDLf}.
	
	Numerical methods that can preserve some structural properties of the model are called geometric integrators or structure-preserving algorithms \cite{Hairer2006geometric}. There is no question that a structural preserving numerical scheme is a token of success in the numerical approximation. For the incompressible NS equations, specialized discretizations exist wherein the discrete counterpart of the energy dissipation law holds rigorously at each time step \cite{SIMO1994,GONG2016JSC,GONG2018SIAM}. While most existing approaches focus exclusively on temporal discretization, constructing fully discrete structure-preserving algorithms demands careful selection of compatible spatial discretization schemes. To develop structure-preserving spatial discretizations using the finite difference method, Verstappen and Veldman proposed the symmetry-preserving discretization for the incompressible NS equations, which could further produces fully discrete structure-preserving methods \cite{VERSTAPPEN2003JCP}. However, these formulations suffer from computational inefficiency due to the inherent coupling between velocity and pressure fields, inevitably necessitating the solution of a system of nonlinear algebraic equations or a system of linear algebraic equations with a variable and time-dependent coefficient matrix.
	
	Motivated by the recently proposed scalar auxiliary variable (SAV) approach for gradient flows \cite{SHENJCP2018,shensiam2019}, Lin, Yang and Dong devised an efficient, unconditionally energy stable scheme for the incompressible NS equations \cite{LINJCP2019}. Their method explicitly discretizes the nonlinear convective term while preserving a modified energy dissipation law, marking the first time that such stability was achieved through explicit treatment of the nonlinear convection term. Subsequently, building on this foundation, Li and Shen constructed and analyzed a fully discrete scheme using the SAV approach for time integration coupled with the Marker-and-Cell method for spatial discretization \cite{SHEN2021SIAM}. Moreover, even more efficient decoupled energy stable schemes for the NS equations have been derived by combining the SAV approach with the projection method, where only a sequence of Poisson-type equations needs to be solved at each time step \cite{Li2021MC,WUJCP2022,ObbadiCMAME2025}. Beyond the SAV framework, it is well established that the Lagrange multiplier technique also yields unconditionally energy stable schemes. Following the work of Cheng et al.  \cite{CHENG2020cmame}, Yang and colleagues introduced a Lagrange multiplier directly into the dynamic equation governing the original kinetic energy in their study \cite{YANGCF2022}. The proposed second-order backward differentiation formula (BDF) scheme was demonstrated to satisfy an energy stability property encompassing both the original energy and specific pressure gradient terms. More recently, Doan et al. developed a dynamically regularized Lagrange multiplier (DRLM) method, which requires only the solutions of two linear Stokes systems and a scalar quadratic equation at each time step \cite{doan4938899dynamically}. With the introduction of the regularization parameter, the Lagrange multiplier can be uniquely determined from the quadratic equation, even with large time step sizes, without affecting accuracy and stability of the numerical solutions. More interestingly, Yang came up with the novel concept of zero energy contribution (ZEC) and went on to develop highly efficient, energy stable schemes for various complex fluid phase field models \cite{YANG2020WILEY,YANG2021JCP,YANG2021CMAMESURF}.
	
	While both the SAV and Lagrange multiplier approaches offer efficient implementation, requiring only the solutions of generalized Stokes systems (for the coupled approach) or a sequence of Poisson-type equations (for the decoupled approach), they usually pose three challenges: (i) Solving the nonlinear algebraic equations for the Lagrange multiplier at each time step can result in multiple real or even complex solutions; (ii) The time step must be sufficiently small to ensure accuracy, especially for the Lagrange multiplier method, leading to high computational costs for long-term simulations; (iii) Most stability results follow a modified energy dissipation law associated with auxiliary variables, rather than the original physical one.
	
	In this paper, our objective is to focus on developing novel, efficient structure-preserving algorithms for the incompressible NS equations to overcome the aforementioned challenges. We begin by introducing a new reformulated technique, termed the robust reformulation, for the incompressible NS equations. This approach yields a strongly equivalent system to the original model, entirely free of auxiliary variables.  A key advantage of this reformulation is its provision of an ideal platform for explicitly treating the convective term while preserving the original energy dissipation law. Based on the Crank-Nicolson (CN) and BDF methods, we present four first- and second-order time-discrete schemes, which are shown to conserve the discrete original energy dissipation law and require only the solutions of three generalized Stokes systems and a $2\times 2$ system of linear equations at each time step. Furthermore, we derive fully discrete structure-preserving schemes using finite differences on staggered grids for spatial discretization \cite{GONG2018SIAM,TC2023Hong,doan4938899dynamically}. We rigorously prove that these proposed methods both conserve the original energy dissipation law at the fully discrete level and guarantee unique solvability. Their efficient implementation is detailed comprehensively. To validate our newly developed schemes, we conduct numerical experiments including a manufactured convergence test and simulations of the Taylor-Green vortex problem, confirming convergence and energy dissipation properties. Finally, we demonstrate the practical applicability of our schemes in capturing dynamic evolution in realistic scenarios, such as lid-driven cavity flow and Kelvin-Helmholtz instability.
	
	The rest of this paper is organized as follows. In Section \ref{Model refomulation}, we introduce our robust reformulation for the incompressible NS equations. Highly efficient time-discrete schemes are derived and shown to preserve the original energy dissipation law in Section \ref{time discretization}. Fully discrete structure-preserving schemes with the staggered-grid finite difference discretization are presented and studied in Section \ref{Fully discrete schemes}.  In Section \ref{Numerical experiments}, various numerical experiments are conducted to demonstrate the accuracy, efficiency, and structure-preserving properties of the newly developed schemes. Finally, we give some conclusions in Section \ref{Conclusion}.
	
	\section{Robust reformulation}\label{Model refomulation}
	In this section, we present a robust reformulation of the incompressible NS equations. This reformulation enables designing highly efficient, linearly energy stable schemes while strictly preserving the original energy dissipation law. Throughout this paper, the standard $L^2$ inner product and the corresponding $L^2$ norm are denoted by $(\cdot,\cdot)$ and $\| \cdot \|$, respectively.
	
	For an incompressible velocity field $\bm{u}$ (i.e., $\nabla \cdot \bm{u} = 0$), subject to either homogeneous Dirichlet or periodic boundary conditions, the following identity holds
	\begin{equation}\label{convectproperty}
		(\bm{u}\cdot \nabla \bm{u}, \bm{u})=0.
	\end{equation}
	Assuming that $\bm{F}(\bm{u})$ is a user-specified function satisfying the non-degeneracy condition
	\begin{equation}\label{eqnonneg}
		\big(\bm{F}(\bm{u}), \bm{u}\big) \neq 0, \quad \forall \|\bm{u}\| \neq 0,
	\end{equation}
	we introduce two functions, $\bm{G}(\bm{u})$ and $\bm{B}(\bm{u},\bm{v})$, defined as follows
	\begin{equation}\label{eqG}
		\bm{G}(\bm{u}) = \begin{cases}
			\frac{\bm{u}\cdot \nabla \bm{u}}{\big(\bm{F}(\bm{u}), \bm{u}\big)}, & \|\bm{u}\| \neq 0,\\
			\bm{0}, & \text{otherwise},
		\end{cases}
	\end{equation}
	and
	\begin{align}\label{eqG2}
		\bm{B}(\bm{u},\bm{v})=\big(\bm{F}(\bm{u}), \bm{v} \big) \bm{G}(\bm{u}) - \big(\bm{G}(\bm{u}), \bm{v} \big) \bm{F}(\bm{u}).
	\end{align}
	It is not difficult to find that $\bm{B}(\bm{u},\bm{v})$ is linear with respect to the second variable $\bm{v}$. Next we give an useful lemme.
	\begin{lem}\label{convectproperty2}
		Under homogeneous Dirichlet or periodic boundary conditions, it holds for $\nabla \cdot \bm{u}=0$ that
		\begin{equation}\label{key_convectproperty1}
			\bm{B}(\bm{u},\bm{u})=\bm{u}\cdot \nabla \bm{u} , 
		\end{equation}
		and
		\begin{equation}\label{key_convectproperty2}
			\big(\bm{B}(\bm{u},\bm{v}),\bm{v}\big)=0.
		\end{equation}
	\end{lem}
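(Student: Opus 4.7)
The plan is to verify both identities by direct substitution into the definition \eqref{eqG2} of $\bm{B}$, using only the divergence-free property \eqref{convectproperty} and the normalization built into $\bm{G}$ in \eqref{eqG}. No integration by parts or regularity manipulations are needed; both identities are essentially algebraic consequences of the way $\bm{B}$ has been assembled.

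For the first identity \eqref{key_convectproperty1}, I would set $\bm{v}=\bm{u}$ in \eqref{eqG2} to obtain
\begin{equation*}
\bm{B}(\bm{u},\bm{u})=\bigl(\bm{F}(\bm{u}),\bm{u}\bigr)\,\bm{G}(\bm{u})-\bigl(\bm{G}(\bm{u}),\bm{u}\bigr)\,\bm{F}(\bm{u}).
\end{equation*}
The first term simplifies to $\bm{u}\cdot\nabla\bm{u}$ by the definition \eqref{eqG} of $\bm{G}$, since the factor $(\bm{F}(\bm{u}),\bm{u})$ cancels the denominator. For the second term, I would compute the inner product $(\bm{G}(\bm{u}),\bm{u})=(\bm{u}\cdot\nabla\bm{u},\bm{u})/(\bm{F}(\bm{u}),\bm{u})$, which vanishes by the divergence-free identity \eqref{convectproperty} under either homogeneous Dirichlet or periodic boundary conditions. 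Hence the second term disappears and \eqref{key_convectproperty1} follows.

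For the second identity \eqref{key_convectproperty2}, I would take the $L^2$ inner product of \eqref{eqG2} with $\bm{v}$, giving
\begin{equation*}
\bigl(\bm{B}(\bm{u},\bm{v}),\bm{v}\bigr)=\bigl(\bm{F}(\bm{u}),\bm{v}\bigr)\bigl(\bm{G}(\bm{u}),\bm{v}\bigr)-\bigl(\bm{G}(\bm{u}),\bm{v}\bigr)\bigl(\bm{F}(\bm{u}),\bm{v}\bigr)=0,
\end{equation*}
since both terms are identical scalar products. This uses only the antisymmetric structure built into the definition of $\bm{B}$ and is independent of the boundary conditions or the divergence constraint.

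There is essentially no obstacle in the argument; the only subtle point worth emphasizing in the write-up is that the hypothesis $\bm{u}\neq\bm{0}$ is needed so that $\bm{G}(\bm{u})$ is well-defined (the denominator $(\bm{F}(\bm{u}),\bm{u})$ is nonzero by the requirement placed on $\bm{F}$), and that the divergence-free condition enters only through \eqref{convectproperty} in the proof of \eqref{key_convectproperty1}.
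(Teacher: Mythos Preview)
Your proposal is correct and follows essentially the same approach as the paper: both identities are obtained by direct substitution into the definition of $\bm{B}$, using \eqref{eqG} to simplify $(\bm{F}(\bm{u}),\bm{u})\bm{G}(\bm{u})$ and \eqref{convectproperty} to kill $(\bm{G}(\bm{u}),\bm{u})$, while \eqref{key_convectproperty2} is the obvious scalar cancellation. Your additional remarks on where $\bm{u}\neq\bm{0}$ and the divergence-free condition are actually used are helpful and worth keeping.
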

	
	\begin{proof}
		According to Eqs. \eqref{convectproperty} and \eqref{eqG}, we can obtain
		$$ \big(\bm{F}(\bm{u}), \bm{u}\big) \bm{G}(\bm{u}) = \bm{u}\cdot \nabla \bm{u},$$ and $$\big(\bm{G}(\bm{u}), \bm{u}\big) = 0,$$
		which can directly lead to
		\begin{equation}
			\bm{B}(\bm{u},\bm{u})=\big(\bm{F}(\bm{u}), \bm{u} \big) \bm{G}(\bm{u}) - \big(\bm{G}(\bm{u}), \bm{u} \big) \bm{F}(\bm{u})=\bm{u}\cdot \nabla \bm{u}.
		\end{equation}
		Moreover, taking the $L^2$ inner product of \eqref{eqG2} with $\bm{v}$, we arrive at
		\begin{equation}
			\big(\bm{B}(\bm{u},\bm{v}),\bm{v}\big)=\big(\bm{F}(\bm{u}), \bm{v} \big)\cdot \big(\bm{G}(\bm{u}),\bm{v}\big) - \big(\bm{G}(\bm{u}), \bm{v} \big)\cdot \big(\bm{F}(\bm{u}),\bm{v}\big)=0.
		\end{equation}
	\end{proof}
	
	Consequently, when subject to homogeneous Dirichlet or periodic boundary conditions, the incompressible NS system \eqref{incompressibleNSEs} admits an equivalent robust reformulation
	\begin{subequations}\label{equalsys}
		\begin{numcases}{}
			\bm{u}_t - \nu \Delta \bm{u} + \bm{B}(\bm{u},\bm{u}) + \nabla p = \bm{f} ,\label{equalsys1}  \\ [0.15cm]
			\nabla \cdot \bm{u} = 0.
		\end{numcases}
	\end{subequations}
	Significantly, this robust reformulation provides a powerful and elegant platform for developing highly efficient, linearly energy stable schemes that faithfully preserve the original energy dissipation law. The practical construction of such schemes will be detailed in the ensuing sections. Here, we explain at the PDE level why linearly structure-preserving algorithms arise very naturally from this robust variant. Starting from an explicit approximation $\overline{\bm{u}}$ of $\bm{u}$, we replace $\bm{B}(\bm{u},\bm{u})$ in \eqref{equalsys} with $\bm{B}(\overline{\bm{u}},\bm{u})$ and then obtain the following perturbed version
	\begin{subequations}\label{linearized_system}
		\begin{numcases}{}
			\bm{u}_t - \nu \Delta \bm{u} + \bm{B}(\overline{\bm{u}},\bm{u}) + \nabla p = \bm{f} ,\label{LS1}  \\ [0.15cm]
			\nabla \cdot \bm{u} = 0.
		\end{numcases}
	\end{subequations}
	Notably, since $\bm{B}(\overline{\bm{u}},\bm{u})$ is linearly dependent on $\bm{u}$, the perturbed system \eqref{linearized_system} forms a coupled linear PDE framework governing both $\bm{u}$ and $p$. Hence, we can also regard \eqref{linearized_system} as the linearization of \eqref{equalsys}. 
	
	\begin{thm}\label{thm:EDL}
		Under homogeneous Dirichlet or periodic boundary conditions, and in the absence of external force $\bm{f}$, the linearized system \eqref{linearized_system} still satisfies the original energy dissipation law  
		\begin{align}
			\frac{\mathrm{d}}{\mathrm{d}t} E = -\nu \|\nabla \bm{u}\|^2, \quad E = \frac{1}{2}\|\bm{u}\|^2.
		\end{align}
	\end{thm}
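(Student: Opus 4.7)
The plan is to mimic the standard energy estimate for the Navier-Stokes equation, with the key adjustment that the nonlinear contribution now comes through the bilinear form $\bm{B}(\bu,\bu)$ rather than the raw convective term $\bu\cdot\nabla\bu$. First, I would take the $L^2$ inner product of the momentum equation \eqref{equalsys1} with $\bu$, obtaining an identity of the form
\begin{equation*}
(\bu_t,\bu) - \nu(\Delta \bu,\bu) + \big(\bm{B}(\bu,\bu),\bu\big) + (\nabla p, \bu) = 0,
\end{equation*}
since $\bm{f}=\b0$ by assumption.

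Next, I would deal with the four terms one by one. The time-derivative term is immediately $\frac{1}{2}\frac{\mathrm{d}}{\mathrm{d}t}\|\bu\|^2 = \frac{\mathrm{d}}{\mathrm{d}t} E$. For the viscous term, integration by parts together with the homogeneous Dirichlet or periodic boundary conditions yields $-\nu(\Delta \bu,\bu) = \nu\|\nabla \bu\|^2$, with the boundary contribution vanishing in either setting. For the pressure term, integration by parts moves the gradient onto $\bu$ and the divergence-free constraint kills the result: $(\nabla p,\bu) = -(p,\nabla\cdot\bu) = 0$.

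The crucial step is handling the reformulated convection. Here I would simply invoke identity \eqref{key_convectproperty2} in Lemma \ref{convectproperty2}, which was proved precisely to furnish $(\bm{B}(\bu,\bv),\bv)=0$ for all $\bv$. Taking $\bv = \bu$ eliminates the nonlinear term exactly, just as the antisymmetric property $(\bu\cdot\nabla\bu,\bu)=0$ does in the original formulation. Combining the four computations gives
\begin{equation*}
\frac{\mathrm{d}}{\mathrm{d}t} E + \nu \|\nabla \bu\|^2 = 0,
\end{equation*}
which is the desired energy law.

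I do not anticipate any genuine obstacle, because Lemma \ref{convectproperty2} has already absorbed all the nontrivial algebra; the remaining steps are routine integration by parts. The only subtlety worth flagging is the tacit assumption $\bu\neq \b0$ needed to make $\bm{G}(\bu)$ and hence $\bm{B}(\bu,\bu)$ well defined, but if $\bu\equiv \b0$ the energy identity holds trivially, so the result is unaffected.
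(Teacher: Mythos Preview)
Your proposal is correct and follows essentially the same approach as the paper's own proof: take the $L^2$ inner product of \eqref{equalsys1} with $\bu$, dispatch the viscous and pressure terms via integration by parts and the divergence-free condition, and invoke Lemma~\ref{convectproperty2} to kill the $\bm{B}(\bu,\bu)$ contribution. Your remark about the $\bu\neq\b0$ subtlety is a nice addition that the paper itself does not make explicit.
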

	
	\begin{proof}
		Under homogeneous Dirichlet or periodic boundary conditions, it is readily to check that
		$$(\Delta \bm{u}, \bm{u}) = -\|\nabla \bm{u}\|^2 \leq 0, \quad (\nabla p, \bm{u}) = -(p, \nabla \cdot \bm{u}) = 0,$$
		where $\nabla \cdot \bm{u} = 0$ was used. According to Lemme \ref{convectproperty2}, it is clear that 
		$$ \big(\bm{B}(\overline{\bm{u}},\bm{u}),\bm{u}\big)=0.$$ For the external force $\bm{f}=\bm{0},$ taking the $L^2$ inner product of \eqref{LS1} with $\bm{u}$, we obtain
		$$(\bm{u}_t, \bm{u}) = -\nu\|\nabla \bm{u}\|^2.$$
		Consequently, we have
		$$\frac{\mathrm{d}}{\mathrm{d}t} E = (\bm{u}_t, \bm{u}) = -\nu\|\nabla \bm{u}\|^2.$$
	\end{proof}
	
	\begin{rem}
		According to Theorem \ref{thm:EDL}, our focus shifts to designing structure-preserving algorithms for the linearized system \eqref{linearized_system}. Such algorithms are not only easy to construct but also naturally linear. Remarkably, the schemes derived from this procedure can be implemented efficiently, as demonstrated in the following sections.
	\end{rem}
	
	\begin{rem}
		For general inhomogeneous Dirichlet boundary conditions $\bm{u}|_{\partial \Omega}=\bm{u}_b$, it holds for $\nabla \cdot \bm{u}=0$ that \cite{doan4938899dynamically}
		\begin{equation}
			(\bm{u}\cdot \nabla \bm{u},\bm{u})=\frac{1}{2} (\bm{u}_b\cdot \bm{n} , |\bm{u}_b|^2)_{\partial \Omega},
		\end{equation}
		where $(\cdot, \cdot)_{\partial \Omega}$ denotes the $L^2$ inner product on $\partial \Omega$ and $\bm{n}$ represents the outward unit normal vector to $\partial \Omega.$ In this case, it suffices to modify $\bm{B}(\bm{u},\bm{v})$ as
		\begin{equation}
			\bm{B}(\bm{u},\bm{v}) = \begin{cases}
				\big(\bm{F}(\bm{u}), \bm{v}\big) \bm{G}(\bm{u}) - \frac{(\bm{u}\cdot \nabla \bm{u},\bm{v})-\frac{1}{2} (\bm{u}_b\cdot \bm{n} , |\bm{u}_b|^2)_{\partial \Omega}}{\big(\bm{F}(\bm{u}), \bm{u}\big)} \bm{F}(\bm{u}), & \|\bm{u}\| \neq 0,\\
				\bm{0}, & \text{otherwise},
			\end{cases}
		\end{equation}
		while the numerical methods proposed in this paper remain valid.
	\end{rem}
	
	\section{Highly efficient time-discrete schemes}\label{time discretization}
	In this section, we present some highly efficient time-discrete schemes that rigorously preserve the original energy dissipation law. Let us next assume a uniform partition of the time interval $[0,T]:~ 0=t_0<t_1<\cdots<t_n<t_{n+1}<\cdots<t_{N_t}=T$ with the time step size $\tau = T/N_t$ and consider the time discretization of the robust equivalent model \eqref{equalsys}.
	
	\subsection{Crank-Nicolson schemes}
	Firstly, we present a first-order linear scheme based on Crank-Nicolson (CN) method, that reads as follows.
	
	\begin{sch}[CN1]\label{sch:CN1}
		Given the initial condition $\bm{u}^0$, we compute $\bm{u}^{n+1}$ for $0\leq n \leq N_t-1$ via
		\begin{subequations}\label{CN1scheme}
			\begin{numcases}{}
				\frac{\bm{u}^{n+1}-\bm{u}^n}{\tau} -\nu \Delta \bm{u}^{n+\frac{1}{2}} +\bm{B}(\bm{u}^{n},\bm{u}^{n+\frac{1}{2}}) + \nabla p^{n+\frac{1}{2}}=\bm{f}^{n+\frac{1}{2}},\label{CN1scheme1}  \\[0.15cm]
				\nabla \cdot \bm{u}^{n+\frac{1}{2}}=0,
			\end{numcases}
		\end{subequations}
		where $\bm{u}^{n+\frac{1}{2}}=(\bm{u}^n+\bm{u}^{n+1})/2$, and $\bm{u}^{n+1}$ satisfies either the homogeneous Dirichlet or periodic boundary conditions.
	\end{sch}
	
	To construct the second-order linear CN scheme, we assume $\bm{u}^{n-1}$ and $\bm{u}^{n}$ are given and denote $\overline{\bm{u}}^{n+\frac{1}{2}}=(3\bm{u}^{n}-\bm{u}^{n-1})/2$ as the second-order extrapolation for approximating $\bm{u}(t_{n+\frac{1}{2}}).$ Then we develop the following second-order linear CN scheme.
	
	\begin{sch}[CN2]\label{sch:CN2}
		Given the initial condition $\bm{u}^0$, and $\bm{u}^1$ is obtained using the CN1 scheme, we compute $\bm{u}^{n+1}$ for $1\leq n \leq N_t-1$ via
		\begin{subequations}\label{CN2scheme}
			\begin{numcases}{}
				\frac{\bm{u}^{n+1}-\bm{u}^n}{\tau} -\nu \Delta \bm{u}^{n+\frac{1}{2}} + \bm{B}(\overline{\bm{u}}^{n+\frac{1}{2}},\bm{u}^{n+\frac{1}{2}}) + \nabla p^{n+\frac{1}{2}}=\bm{f}^{n+\frac{1}{2}}, \label{CN2scheme1}  \\[0.15cm]
				\nabla \cdot \bm{u}^{n+\frac{1}{2}}=0,
			\end{numcases}
		\end{subequations}
		where $\bm{u}^{n+1}$ satisfies either the homogeneous Dirichlet or periodic boundary conditions.
	\end{sch}
	
	\begin{thm}\label{thm:CN_EDL}
		Under homogeneous Dirichlet or periodic boundary conditions, and in the absence of external force $\bm{f}$, both the CN1 and CN2 schemes preserve the following discrete energy dissipation law
		\begin{align}
			\frac{E^{n+1}-E^n}{\tau} = -\nu \|\nabla \bm{u}^{n+\frac{1}{2}} \|^2 ,
		\end{align}
		where the discrete energy at $t = t_n$ is defined as
		\begin{align}
			E^{n} = \frac{1}{2}\|\bm{u}^n\|^2.
		\end{align}
	\end{thm}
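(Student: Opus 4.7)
The plan is to mimic the continuous energy identity from Theorem \ref{thm:EDL} at the discrete level by testing the momentum equation against $\bm{u}^{n+\frac{1}{2}}$, and then to exploit the two zero-inner-product properties of $\bm{B}$ and $\nabla p$ along with the standard Crank--Nicolson telescoping identity for the time derivative term.

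First I would treat CN1. I take the $L^2$ inner product of \eqref{CN1scheme1} (with $\bm{f}^{n+\frac{1}{2}}=\bm{0}$) against $\bm{u}^{n+\frac{1}{2}}=(\bm{u}^n+\bm{u}^{n+1})/2$ and handle the four terms separately. The time-derivative term gives the telescoping identity
\begin{equation*}
\left(\frac{\bm{u}^{n+1}-\bm{u}^n}{\tau},\bm{u}^{n+\frac{1}{2}}\right)
=\frac{1}{2\tau}\big(\|\bm{u}^{n+1}\|^2-\|\bm{u}^n\|^2\big)=\frac{E^{n+1}-E^n}{\tau}.
\end{equation*}
The viscous term integrates by parts (legitimate under either boundary condition) to yield $-\nu(\Delta\bm{u}^{n+\frac{1}{2}},\bm{u}^{n+\frac{1}{2}})=\nu\|\nabla\bm{u}^{n+\frac{1}{2}}\|^2$. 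The pressure term vanishes because $\nabla\cdot\bm{u}^{n+\frac{1}{2}}=0$ from the second equation of \eqref{CN1scheme} and the boundary conditions make the boundary term drop. The crucial term is the convective contribution: by the linearity of $\bm{B}(\bm{u},\bm{v})$ in its second argument, $\bm{B}(\bm{u}^n,\bm{u}^{n+\frac{1}{2}})$ is well-defined, and Lemma \ref{convectproperty2} (specifically \eqref{key_convectproperty2} applied with $\bm{u}=\bm{u}^n$ and $\bm{v}=\bm{u}^{n+\frac{1}{2}}$) gives $(\bm{B}(\bm{u}^n,\bm{u}^{n+\frac{1}{2}}),\bm{u}^{n+\frac{1}{2}})=0$. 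Collecting these four pieces yields the claimed identity.

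The CN2 case is essentially identical: I again test \eqref{CN2scheme1} against $\bm{u}^{n+\frac{1}{2}}$; the time derivative, viscous, and pressure terms are handled exactly as above, and the convective term vanishes because now \eqref{key_convectproperty2} is applied with $\bm{u}=\overline{\bm{u}}^{n+\frac{1}{2}}$ and $\bm{v}=\bm{u}^{n+\frac{1}{2}}$, which still produces $(\bm{B}(\overline{\bm{u}}^{n+\frac{1}{2}},\bm{u}^{n+\frac{1}{2}}),\bm{u}^{n+\frac{1}{2}})=0$.

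I do not anticipate a genuine obstacle here; the whole point of the equivalent reformulation through $\bm{B}(\bm{u},\bm{v})$ is that \eqref{key_convectproperty2} holds for \emph{any} first argument, which is exactly what makes the explicit linearization of the convection harmless at the energy level. The only subtlety worth noting in the write-up is that the test function must be $\bm{u}^{n+\frac{1}{2}}$ (not $\bm{u}^{n+1}$ or $\bm{u}^n$), so that both the kinetic energy telescoping and the cancellation in the $\bm{B}$ term occur simultaneously; this is also why both the divergence-free constraint and the $\bm{B}$ term are evaluated at the half-step, matching the test function.
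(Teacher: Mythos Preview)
Your proposal is correct and follows essentially the same approach as the paper: test the momentum equation against $\bm{u}^{n+\frac{1}{2}}$, use \eqref{key_convectproperty2} to kill the convective term, and read off the telescoping energy identity. Your write-up is in fact more explicit than the paper's, which simply says ``similar to the proof of Theorem~\ref{thm:EDL}'' and records the resulting identity.
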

	
	\begin{proof}
		Similar to the proof of Theorem \ref{thm:EDL}, taking the $L^2$ inner product of \eqref{CN1scheme1} or \eqref{CN2scheme1} with $\bm{u}^{n+\frac{1}{2}}$, one can arrive at
		\begin{align*}
			\left(\frac{\bm{u}^{n+1}-\bm{u}^{n}}{\tau},\bm{u}^{n+\frac{1}{2}}\right) = -\nu \|\nabla \bm{u}^{n+\frac{1}{2}} \|^2.
		\end{align*}
		Therefore, we have
		\begin{align*}
			\frac{E^{n+1}-E^n}{\tau} =	\left(\frac{\bm{u}^{n+1}-\bm{u}^{n}}{\tau},\bm{u}^{n+\frac{1}{2}}\right) = -\nu \|\nabla \bm{u}^{n+\frac{1}{2}} \|^2.
		\end{align*}
	\end{proof}
	
	In what follows, we present an efficient implementation of the CN2 scheme. The CN1 scheme can be solved similarly, and the details are omitted for brevity. For the sake of simplicity, we introduce the following notations
	\begin{align}\label{eq:alpha-beta}
		\alpha = \big(\bm{F}(\overline{\bm{u}}^{n+\frac{1}{2}}), \bm{u}^{n+\frac{1}{2}} \big), \quad
		\beta = \big(\bm{G}(\overline{\bm{u}}^{n+\frac{1}{2}}), \bm{u}^{n+\frac{1}{2}} \big).
	\end{align}
	It is readily to check that
	\begin{align}
		\bm{B}(\overline{\bm{u}}^{n+\frac{1}{2}},\bm{u}^{n+\frac{1}{2}})=\alpha \bm{G}(\overline{\bm{u}}^{n+\frac{1}{2}})-\beta \bm{F}(\overline{\bm{u}}^{n+\frac{1}{2}}).
	\end{align}
	Therefore, the linear system \eqref{CN2scheme} can be rewritten into 
	\begin{subequations}\label{Compactformsheme1st}
		\begin{numcases}{}
			\frac{2(\bm{u}^{n+\frac{1}{2}}-\bm{u}^n)}{\tau}-\nu \Delta \bm{u}^{n+\frac{1}{2}} +\alpha \bm{G}(\overline{\bm{u}}^{n+\frac{1}{2}})-\beta \bm{F}(\overline{\bm{u}}^{n+\frac{1}{2}})+\nabla p^{n+\frac{1}{2}}=\bm{f}^{n+\frac{1}{2}},  \\[0.15cm]
			\nabla \cdot \bm{u}^{n+\frac{1}{2}}=0,
		\end{numcases}
	\end{subequations}
	where $\bm{u}^{n+1} = 2\bm{u}^{n+\frac{1}{2}} - \bm{u}^n$ was used.
	It is readily to check that the unknown quantities $\bm{u}^{n+\frac{1}{2}}$ and $p^{n+\frac{1}{2}}$ can be decomposed into three components as follows
	\begin{subequations}\label{Decomposedtimed}
		\begin{numcases}{}	
			\bm{u}^{n+\frac{1}{2}} = \alpha \bm{u}^{n+\frac{1}{2}}_1+\beta \bm{u}^{n+\frac{1}{2}}_2+\bm{u}^{n+\frac{1}{2}}_3, \label{Decomposedtimed1}\\
			p^{n+\frac{1}{2}}=\alpha p^{n+\frac{1}{2}}_1+\beta p^{n+\frac{1}{2}}_2+p^{n+\frac{1}{2}}_3,\label{Decomposedtimed2}
		\end{numcases}
	\end{subequations}
	where $\bm{u}^{n+\frac{1}{2}}_i$ and $p^{n+\frac{1}{2}}_i \ (i=1,2,3)$ yield three systems of generalized Stokes equations, namely,
	\begin{subequations}\label{Decomposedsystem}
		\begin{numcases}{}
			\frac{2}{\tau}\bm{u}^{n+\frac{1}{2}}_i-\nu \Delta \bm{u}^{n+\frac{1}{2}}_i +\nabla p^{n+\frac{1}{2}}_i=\bm{g}^n_i,  \\[0.15cm]
			\nabla \cdot \bm{u}^{n+\frac{1}{2}}_i=0 ,
		\end{numcases}
	\end{subequations}
	with
	\begin{align*}
		\bm{g}^n_1=-\bm{G}(\overline{\bm{u}}^{n+\frac{1}{2}}), \quad
		\bm{g}^n_2=\bm{F}(\overline{\bm{u}}^{n+\frac{1}{2}}),\quad
		\bm{g}^n_3=\frac{2\bm{u}^n}{\tau}+\bm{f}^{n+\frac{1}{2}}.
	\end{align*}
	It is worth noting that $\bm{u}^{n+\frac{1}{2}}_i \ (i=1,2,3)$ satisfy the same boundary conditions as $\bm{u}^{n+\frac{1}{2}}$, i.e., either homogeneous Dirichlet or periodic boundary conditions. Moreover, there exists numerous well-established and efficient methods for solving the linear system \eqref{Decomposedsystem}, which we do not elaborate on here. Interested readers are encouraged to consult Refs. \cite{Boyce2009JCP,doan4938899dynamically} for further details. Once $\bm{u}^{n+\frac{1}{2}}_i \ (i=1,2,3)$ are determined, plugging \eqref{Decomposedtimed1} into \eqref{eq:alpha-beta} leads to the following $2 \times 2$ linear system with respect to $\alpha$ and $\beta$
	\begin{align}\label{Matrixequation}
		A x =	b,
	\end{align}
	where 
	\begin{align*}
		A = \begin{pmatrix}
			1-(\bm{F}(\overline{\bm{u}}^{n+\frac{1}{2}}),\bm{u}^{n+\frac{1}{2}}_1) & -(\bm{F}(\overline{\bm{u}}^{n+\frac{1}{2}}),\bm{u}^{n+\frac{1}{2}}_2)\\
			-(\bm{G}(\overline{\bm{u}}^{n+\frac{1}{2}}),\bm{u}^{n+\frac{1}{2}}_1) &  1-(\bm{G}(\overline{\bm{u}}^{n+\frac{1}{2}}),\bm{u}^{n+\frac{1}{2}}_2)
		\end{pmatrix},\quad
		x = \begin{pmatrix}
			\alpha\\
			\beta
		\end{pmatrix},\quad b = \begin{pmatrix}
			(\bm{F}(\overline{\bm{u}}^{n+\frac{1}{2}}),\bm{u}^{n+\frac{1}{2}}_3)\\
			(\bm{G}(\overline{\bm{u}}^{n+\frac{1}{2}}),\bm{u}^{n+\frac{1}{2}}_3)
		\end{pmatrix}.
	\end{align*}
	After solving $\alpha$ and $\beta$, $\bm{u}^{n+\frac{1}{2}}$ and $p^{n+\frac{1}{2}}$ can be updated by utilizing \eqref{Decomposedtimed1} and \eqref{Decomposedtimed2}, respectively. Finally, we compute 
	\begin{align}\label{UpdatefinalCN}
		\bm{u}^{n+1} = 2\bm{u}^{n+\frac{1}{2}} - \bm{u}^n.
	\end{align}
	To assist readers who are interested, we provide a summary of the implementations of our proposed CN2 scheme in Algorithm \ref{power}.
	\begin{algorithm}[H]
		\renewcommand{\algorithmicrequire}{\textbf{Date:}}
		\renewcommand{\algorithmicensure}{\textbf{Result:}}
		\caption{Pseudocode outlining the implementation of Scheme \ref{CN2scheme} for each time step}
		\label{power}
		\begin{algorithmic}[1] 
			\REQUIRE  Input $\bm{u}^{n-1}, ~\bm{u}^n$; 
			\STATE Calculate $\bm{u}_i^{n+\frac{1}{2}}$ and $p_i^{n+\frac{1}{2}}$ from the linear system \eqref{Decomposedsystem};
			\STATE Update $\alpha$ and $\beta$ by solving the $2\times 2$ linear system \eqref{Matrixequation};
			\STATE Calculate $\bm{u}^{n+\frac{1}{2}}$ and $p^{n+\frac{1}{2}}$ using \eqref{Decomposedtimed1} and \eqref{Decomposedtimed2}, respectively;
			\STATE Update $\bm{u}^{n+1}$ using \eqref{UpdatefinalCN};
			\ENSURE Output $\bm{u}^{n+1}$. 
		\end{algorithmic}
	\end{algorithm}
	
	\subsection{Backward differentiation formula schemes}
	By applying the first-order backward differentiation formula (BDF) method to the robust equivalent model \eqref{equalsys}, we propose the following linear BDF1 scheme.
	\begin{sch}[BDF1]\label{sch:BDF1}
		Given the initial condition $\bm{u}^0$, we compute $\bm{u}^{n+1}$ for $0\leq n \leq N_t-1$ via
		\begin{subequations}\label{BDF1scheme}
			\begin{numcases}{}
				\frac{\bm{u}^{n+1}-\bm{u}^n}{\tau} -\nu \Delta \bm{u}^{n+1} + 	\bm{B}(\bm{u}^n,\bm{u}^{n+1}) + \nabla p^{n+1}=\bm{f}^{n+1},\label{BDF1scheme1}  \\[0.15cm]
				\nabla \cdot \bm{u}^{n+1}=0,
			\end{numcases}
		\end{subequations}
		where $\bm{u}^{n+1}$ satisfies either the homogeneous Dirichlet or periodic boundary conditions.
	\end{sch}
	\begin{thm}
		Under homogeneous Dirichlet or periodic boundary conditions, and in the absence of external force $\bm{f}$, the BDF1 scheme preserves the following discrete energy dissipation law
		\begin{align}
			\frac{E^{n+1}-E^n}{\tau} = -\nu \|\nabla \bm{u}^{n+1} \|^2 -\frac{1}{2\tau}\| \bm{u}^{n+1}-\bm{u}^n \|^2,
		\end{align}
		where the discrete energy at $t = t_n$ is defined as
		\begin{align}
			E^{n} = \frac{1}{2}\|\bm{u}^n\|^2.
		\end{align}
	\end{thm}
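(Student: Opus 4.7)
The plan is to mirror the proof of Theorem \ref{thm:CN_EDL} by taking the $L^2$ inner product of the momentum equation \eqref{BDF1scheme1} with the end-of-step velocity $\bm{u}^{n+1}$ and exploiting the three structural identities that render the convective, pressure, and viscous contributions tractable. With $\bm{f} = \bm{0}$, the pressure term vanishes through integration by parts because $\nabla \cdot \bm{u}^{n+1} = 0$, i.e. $(\nabla p^{n+1}, \bm{u}^{n+1}) = -(p^{n+1}, \nabla \cdot \bm{u}^{n+1}) = 0$; the viscous term gives $(-\nu \Delta \bm{u}^{n+1}, \bm{u}^{n+1}) = \nu \|\nabla \bm{u}^{n+1}\|^2$ under either homogeneous Dirichlet or periodic boundary conditions; and the modified convective term is annihilated by the second identity of Lemma \ref{convectproperty2}, so that $(\bm{B}(\bm{u}^n, \bm{u}^{n+1}), \bm{u}^{n+1}) = 0$ directly (the linearity of $\bm{B}$ in its second argument making this application immediate).

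The heart of the argument, and the only point where the BDF1 analysis departs qualitatively from the Crank--Nicolson case, is the treatment of the discrete time derivative $\tau^{-1}(\bm{u}^{n+1} - \bm{u}^n, \bm{u}^{n+1})$. Here I would invoke the standard polarization identity $2(a - b, a) = \|a\|^2 - \|b\|^2 + \|a - b\|^2$ with $a = \bm{u}^{n+1}$ and $b = \bm{u}^n$, producing
\begin{equation*}
\frac{1}{\tau}\bigl(\bm{u}^{n+1} - \bm{u}^n, \bm{u}^{n+1}\bigr) = \frac{E^{n+1} - E^n}{\tau} + \frac{1}{2\tau}\|\bm{u}^{n+1} - \bm{u}^n\|^2.
\end{equation*}
Combining this with the three identities above and rearranging yields exactly the claimed dissipation law, with the additional nonpositive term $-\frac{1}{2\tau}\|\bm{u}^{n+1} - \bm{u}^n\|^2$ appearing as the characteristic numerical dissipation of backward Euler, on top of the physical dissipation $-\nu \|\nabla \bm{u}^{n+1}\|^2$.

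I do not anticipate any genuine obstacle: the proof is a direct transcription of the continuous energy identity of Theorem \ref{thm:EDL}, with the polarization identity supplying the sole new ingredient relative to the CN case. The only subtlety worth flagging is that the argument is entirely unconditional in $\tau$ precisely because the surplus term $-\frac{1}{2\tau}\|\bm{u}^{n+1} - \bm{u}^n\|^2$ is manifestly nonpositive, so that the original energy dissipation structure is preserved for arbitrarily large step sizes; this is in fact stronger than the CN identity, where no such auxiliary dissipation appears. If one wished, the same template generalizes without difficulty to a second-order BDF2 scheme via the corresponding telescoping identity $2(3a - 4b + c, a) = \|a\|^2 + \|2a-b\|^2 - \|b\|^2 - \|2b-c\|^2 + \|a - 2b + c\|^2$, which is likely the path taken later in the paper.
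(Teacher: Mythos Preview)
Your proposal is correct and follows essentially the same approach as the paper: test \eqref{BDF1scheme1} with $\bm{u}^{n+1}$, eliminate the pressure via the divergence constraint, the convective term via \eqref{key_convectproperty2}, and handle the time-difference with the polarization identity $(\bm{u}^{n+1}-\bm{u}^n,\bm{u}^{n+1})=\tfrac{1}{2}(\|\bm{u}^{n+1}\|^2-\|\bm{u}^n\|^2+\|\bm{u}^{n+1}-\bm{u}^n\|^2)$. Your anticipation of the BDF2 telescoping identity is also exactly what the paper uses in the subsequent theorem.
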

	
	\begin{proof}
		Similar to the proof of Theorem \ref{thm:EDL}, taking the $L^2$ inner product of \eqref{BDF1scheme1} with $\bm{u}^{n+1}$, one can arrive at
		\begin{align*}
			\left(\frac{\bm{u}^{n+1}-\bm{u}^{n}}{\tau},\bm{u}^{n+1}\right) = -\nu \|\nabla \bm{u}^{n+1} \|^2.
		\end{align*}
		It is easy to check that 
		\begin{align*}
			\left(\bm{u}^{n+1}-\bm{u}^{n},\bm{u}^{n+1}\right)=\frac{1}{2} \left(\| \bm{u}^{n+1} \|^2-\| \bm{u}^{n}\|^2+ \| \bm{u}^{n+1} -\bm{u}^{n}\|^2\right).
		\end{align*}
		Therefore, we have
		\begin{align*}
			\frac{E^{n+1}-E^n}{\tau} = -\nu \|\nabla \bm{u}^{n+1} \|^2 -\frac{1}{2\tau}\| \bm{u}^{n+1}-\bm{u}^n \|^2.
		\end{align*}
	\end{proof}
	
	To construct the second-order linear scheme based on the BDF2 method, we assume $\bm{u}^{n-1}$ and $\bm{u}^{n}$ are given and denote $\overline{\bm{u}}^{n+1}=2\bm{u}^{n}-\bm{u}^{n-1}$ as the second-order extrapolation for approximating $\bm{u}(t_{n+1})$. 
	Then we develop the following second-order linear BDF scheme.
	\begin{sch}[BDF2]\label{sch:BDF2}
		Given the initial condition $\bm{u}^0$, and $\bm{u}^1$ is obtained using the BDF1 scheme, we compute $\bm{u}^{n+1}$ for $1\leq n \leq N_t-1$ via
		\begin{subequations}\label{BDF2scheme}
			\begin{numcases}{}
				\frac{3\bm{u}^{n+1}-4\bm{u}^n+\bm{u}^{n-1}}{2\tau} -\nu \Delta \bm{u}^{n+1} + \bm{B}(\overline{\bm{u}}^{n+1},\bm{u}^{n+1}) + \nabla p^{n+1}=\bm{f}^{n+1}, \label{BDF2scheme1}  \\[0.15cm]
				\nabla \cdot \bm{u}^{n+1}=0,
			\end{numcases}
		\end{subequations}
		where $\bm{u}^{n+1}$ satisfies either the homogeneous Dirichlet or periodic boundary conditions.
	\end{sch}
	
	\begin{thm}
		Under homogeneous Dirichlet or periodic boundary conditions, and in the absence of external force $\bm{f}$, the BDF2 scheme preserves the following discrete energy dissipation law
		\begin{align}
			\frac{\wh{E}^{n+1}-\wh{E}^n}{\tau} = -\nu \|\nabla \bm{u}^{n+1} \|^2 -\frac{1}{4\tau}\| \bm{u}^{n+1}-2\bm{u}^n +\bm{u}^{n-1}\|^2,
		\end{align}
		where the discrete energy at $t = t_n$ is defined as
		\begin{align}
			\wh{E}^{n} =\frac{1}{4}\Big(\|\bm{u}^n\|^2+\| 2 \bm{u}^n-\bm{u}^{n-1} \|^2\Big) .
		\end{align}
	\end{thm}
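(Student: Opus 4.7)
The plan is to mimic the BDF1 proof by taking the $L^2$ inner product of \eqref{BDF2scheme1} with $\bm{u}^{n+1}$, and then use an algebraic identity to recognize the resulting time-difference quotient as $(\wh{E}^{n+1}-\wh{E}^n)/\tau$ plus a numerical dissipation term. As in the preceding theorems, the convection term drops out via Lemma \ref{convectproperty2} applied with $\bm{u}=\overline{\bm{u}}^{n+1}$ and $\bm{v}=\bm{u}^{n+1}$, i.e.\ $\bigl(\bm{B}(\overline{\bm{u}}^{n+1},\bm{u}^{n+1}),\bm{u}^{n+1}\bigr)=0$; the pressure term vanishes by integration by parts together with $\nabla\cdot\bm{u}^{n+1}=0$; and the viscous term integrates by parts to give $-\nu\|\nabla\bm{u}^{n+1}\|^2$. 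What remains is the identity
\begin{align*}
\frac{1}{2\tau}\bigl(3\bm{u}^{n+1}-4\bm{u}^n+\bm{u}^{n-1},\bm{u}^{n+1}\bigr)=-\nu\|\nabla\bm{u}^{n+1}\|^2.
\end{align*}

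The main work is therefore the purely algebraic BDF2 identity: for any vectors $a,b,c$ in the $L^2$ inner product space,
\begin{align*}
\bigl(3a-4b+c,\,a\bigr)=\tfrac{1}{2}\bigl(\|a\|^2+\|2a-b\|^2\bigr)-\tfrac{1}{2}\bigl(\|b\|^2+\|2b-c\|^2\bigr)+\tfrac{1}{2}\|a-2b+c\|^2.
\end{align*}
I would verify this by direct expansion of the right-hand side: the quadratic cross terms $-4(a,b)+2(a,c)-4(b,c)$ arising from $\|a-2b+c\|^2$ combine with the cross terms from the two $\|2\cdot-\cdot\|^2$ blocks, and after cancellation only $3\|a\|^2-4(a,b)+(a,c)$ survives, which is precisely $(3a-4b+c,a)$. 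This is a standard G-stability-type identity for BDF2, so it is really a bookkeeping exercise rather than a subtle estimate.

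With $a=\bm{u}^{n+1}$, $b=\bm{u}^n$, $c=\bm{u}^{n-1}$, the first two bracketed quantities on the right are exactly $2\wh{E}^{n+1}$ and $2\wh{E}^n$. Substituting back and dividing by $2\tau$ yields
\begin{align*}
\frac{\wh{E}^{n+1}-\wh{E}^n}{\tau}+\frac{1}{4\tau}\|\bm{u}^{n+1}-2\bm{u}^n+\bm{u}^{n-1}\|^2=-\nu\|\nabla\bm{u}^{n+1}\|^2,
\end{align*}
which rearranges to the claimed discrete energy dissipation law. I expect no real obstacle here: the only nontrivial ingredient is the BDF2 reformulation identity above, and once that is stated and checked the rest follows from the same inner-product manipulations used for the CN and BDF1 theorems.
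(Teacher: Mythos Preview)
Your proposal is correct and follows essentially the same approach as the paper: take the inner product of \eqref{BDF2scheme1} with $\bm{u}^{n+1}$, eliminate the convection and pressure terms via Lemma~\ref{convectproperty2} and the divergence-free condition, and then apply the standard BDF2 algebraic identity (the paper states it in the equivalent form with $2\bm{u}^{n+1}$ on the right). The identity you wrote is exactly the paper's identity divided by $2$, so there is no substantive difference.
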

	
	\begin{proof}
		Similar to the proof of Theorem \ref{thm:EDL}, taking the $L^2$ inner product of \eqref{BDF2scheme1} with $\bm{u}^{n+1}$, one can arrive at
		\begin{align*}
			\left(\frac{3\bm{u}^{n+1}-4\bm{u}^{n}+\bm{u}^{n-1}}{2\tau},\bm{u}^{n+1}\right) = -\nu \|\nabla \bm{u}^{n+1} \|^2.
		\end{align*}
		It is readily to check that
		\begin{align*}
			\left(3\bm{u}^{n+1}-4\bm{u}^{n}+\bm{u}^{n-1},2\bm{u}^{n+1}\right)=&\| \bm{u}^{n+1} \|^2-\| \bm{u}^{n}\|^2+ \| 2\bm{u}^{n+1} -\bm{u}^{n}\|^2- \| 2\bm{u}^{n} -\bm{u}^{n-1}\|^2\\ &+ \| \bm{u}^{n+1} -2\bm{u}^{n}+\bm{u}^{n-1}\|^2.
		\end{align*}
		Therefore, we obtain
		\begin{align*}
			\frac{\wh{E}^{n+1}-\wh{E}^n}{\tau} = -\nu \|\nabla \bm{u}^{n+1} \|^2 -\frac{1}{4\tau}\| \bm{u}^{n+1}-2\bm{u}^n +\bm{u}^{n-1}\|^2.
		\end{align*}
	\end{proof}

	In what follows, we present an efficient implementation of the BDF2 scheme. The BDF1 scheme can be solved similarly, and the details are omitted for brevity. For the sake of simplicity, we introduce the following notations
	\begin{align}
		\label{BDFeq:alpha-beta}
		\alpha = \big(\bm{F}(\overline{\bm{u}}^{n+1}), \bm{u}^{n+1} \big), \quad
		\beta = \big(\bm{G}(\overline{\bm{u}}^{n+1}), \bm{u}^{n+1} \big).
	\end{align}
	It is readily to check that
	\begin{align}
		\bm{B}(\overline{\bm{u}}^{n+1},\bm{u}^{n+1})=\alpha \bm{G}(\overline{\bm{u}}^{n+1})-\beta \bm{F}(\overline{\bm{u}}^{n+1}).
	\end{align}
	Therefore, the linear system \eqref{BDF2scheme} can be rewritten into 
	\begin{subequations}\label{BDFCompactformsheme1st}
		\begin{numcases}{}
			\frac{3\bm{u}^{n+1}-4\bm{u}^n+\bm{u}^{n-1}}{2\tau}-\nu \Delta \bm{u}^{n+1} +\alpha \bm{G}(\overline{\bm{u}}^{n+1})-\beta \bm{F}(\overline{\bm{u}}^{n+1})+\nabla p^{n+1}=\bm{f}^{n+1},  \\[0.15cm]
			\nabla \cdot \bm{u}^{n+1}=0 .
		\end{numcases}
	\end{subequations}
	It is readily to check that the unknown quantities $\bm{u}^{n+1}$ and $p^{n+1}$ can be decomposed into three components as follows
	\begin{subequations}\label{BDFDecomposedtimed}
		\begin{numcases}{}	
			\bm{u}^{n+1} = \alpha \bm{u}^{n+1}_1+\beta \bm{u}^{n+1}_2+\bm{u}^{n+1}_3, \label{BDFDecomposedtimed1}\\
			p^{n+1}=\alpha p^{n+1}_1+\beta p^{n+1}_2+p^{n+1}_3,\label{BDFDecomposedtimed2}
		\end{numcases}
	\end{subequations}
	where $\bm{u}^{n+1}_i$ and $p^{n+1}_i \ (i=1,2,3)$ yield three systems of generalized Stokes equations, namely,
	\begin{subequations}\label{BDFDecomposedsystem}
		\begin{numcases}{}
			\frac{3}{2\tau}\bm{u}^{n+1}_i-\nu \Delta \bm{u}^{n+1}_i +\nabla p^{n+1}_i=\bm{g}^n_i,  \\[0.15cm]
			\nabla \cdot \bm{u}^{n+1}_i=0 ,
		\end{numcases}
	\end{subequations}
	with
	\begin{align*}
		\bm{g}^n_1=-\bm{G}(\overline{\bm{u}}^{n+1}), \quad
		\bm{g}^n_2=\bm{F}(\overline{\bm{u}}^{n+1}),\quad
		\bm{g}^n_3=\frac{1}{2\tau}\big(4\bm{u}^n-\bm{u}^{n-1}\big)+\bm{f}^{n+1}.
	\end{align*}
	Once $\bm{u}^{n+1}_i \ (i=1,2,3)$ are determined, plugging \eqref{BDFDecomposedtimed1} into \eqref{BDFeq:alpha-beta} leads to the following $2 \times 2$ linear system with respect to $\alpha$ and $\beta$
	\begin{align}\label{BDFMatrixequation}
		A x =	b,
	\end{align}
	where 
	\begin{align*}
		A = \begin{pmatrix}
			1-(\bm{F}(\overline{\bm{u}}^{n+1}),\bm{u}^{n+1}_1) & -(\bm{F}(\overline{\bm{u}}^{n+1}),\bm{u}^{n+1}_2)\\
			-(\bm{G}(\overline{\bm{u}}^{n+1}),\bm{u}^{n+1}_1) &  1-(\bm{G}(\overline{\bm{u}}^{n+1}),\bm{u}^{n+1}_2)
		\end{pmatrix},\quad
		x = \begin{pmatrix}
			\alpha\\
			\beta
		\end{pmatrix},\quad b = \begin{pmatrix}
			(\bm{F}(\overline{\bm{u}}^{n+1}),\bm{u}^{n+1}_3)\\
			(\bm{G}(\overline{\bm{u}}^{n+1}),\bm{u}^{n+1}_3)
		\end{pmatrix}.
	\end{align*}
	After solving $\alpha$ and $\beta$, $\bm{u}^{n+1}$ and $p^{n+1}$ can be updated by utilizing \eqref{BDFDecomposedtimed1} and \eqref{BDFDecomposedtimed2}, respectively. The implementation procedure of our BDF2 scheme is summarized in Algorithm \ref{power3}.
	\begin{algorithm}[H]
		\renewcommand{\algorithmicrequire}{\textbf{Date:}}
		\renewcommand{\algorithmicensure}{\textbf{Result:}}
		\caption{Pseudocode outlining the implementation of Scheme \ref{sch:BDF2} for each time step}
		\label{power3}
		\begin{algorithmic}[1] 
			\REQUIRE  Input $\bm{u}^{n-1},~ \bm{u}^n$; 
			\STATE Calculate $\bm{u}_i^{n+1}$ and $p_i^{n+1}$ from the linear system \eqref{BDFDecomposedsystem};
			\STATE Update $\alpha$ and $\beta$ by solving the $2\times 2$ linear system \eqref{BDFMatrixequation};
			\STATE Calculate $\bm{u}^{n+1}$ and $p^{n+1}$ using \eqref{BDFDecomposedtimed1} and \eqref{BDFDecomposedtimed2}, respectively;
			\ENSURE Output $\bm{u}^{n+1}$. 
		\end{algorithmic}
	\end{algorithm}
	
	\section{Fully discrete schemes}\label{Fully discrete schemes}
	In this section, we employ the finite difference approximation on a staggered grid to the time-discrete systems proposed in the previous section, thereby deriving the corresponding fully discrete schemes. For clarity, our discussion centers on the two-dimensional scenario with periodic boundary conditions. However, it is important to mention that the findings of this study are equally relevant to both two- and three-dimensional instances with either homogeneous Dirichlet or periodic boundary conditions. The proposed fully discrete schemes are demonstrated to not only uphold the energy dissipation law at the fully discrete level but also guarantee unique solvability, all while maintaining computational efficiency.
	
	\subsection{The spatial discretization}
	
	Let $\Omega=[0,L_x]\times[0,L_y]$ denote a two-dimensional rectangular domain. Given positive integers $N_x$ and $N_y$, the mesh sizes are defined as $h_x=L_x/N_x$ and $h_y=L_y/N_y$. We further define the following two-dimensional point sets:
	\begin{align*}
		&\Omega_{ew}=\left\{ (x_{i+\frac{1}{2}},y_j)|i=0,\ldots,N_x-1,\; j=1,\ldots,N_y\right\},\\[0.15cm]
		&\Omega_{ns}=\left\{ (x_{i},y_{j+\frac{1}{2}})|i=1,\ldots,N_x,\;j=0,\ldots,N_y-1\right\},\\[0.15cm]
		&\Omega_{c}=\left\{ (x_{i},y_j)|i=1,\ldots,N_x,\;j=1,\ldots,N_y\right\},
	\end{align*}
	where $x_l=(l-1/2)h_x$, $y_l=(l-1/2)h_y$ with $l$ taking either integer or half-integer values. The arrangement of unknown variables within $\Omega_{ew}$, $\Omega_{ns}$ and $\Omega_{c}$ is depicted on a staggered grid in Figure \ref{fig: mesh}. In this schematic, $\Omega_{ew}$ corresponds to blue squares, $\Omega_{ns}$ to red triangles, and $\Omega_{c}$ to black circles. Then we define the following discrete function spaces \cite{GONG2018SIAM,TC2023Hong}
	\begin{align*}
		&V_{ew}=\left\{ U|U=\{u(x_{i+\frac{1}{2}},y_j)|(x_{i+\frac{1}{2}},y_j)\in \Omega_{ew}\} \right\},\\
		&V_{ns}=\left\{V|V=\{v(x_{i},y_{j+\frac{1}{2}})|(x_{i},y_{j+\frac{1}{2}})\in \Omega_{ns} \} \right\},\\
		&V_c=\left\{P|P=\{p(x_{i},y_j)|(x_{i},y_j)\in \Omega_{c}\} \right\}.
	\end{align*}
	For any two matrices $A$ and $B$ of the same size, we define
	\begin{align*}
		(A,B)_h=h_x h_y \mathrm{Tr} (A^T B),\quad \| A \|_h=\sqrt{(A,A)_h},
	\end{align*}
	as the discrete $l^2$ inner product and norm.
	
	\begin{figure}[H]
		\centering
		\includegraphics[width=3.0in]{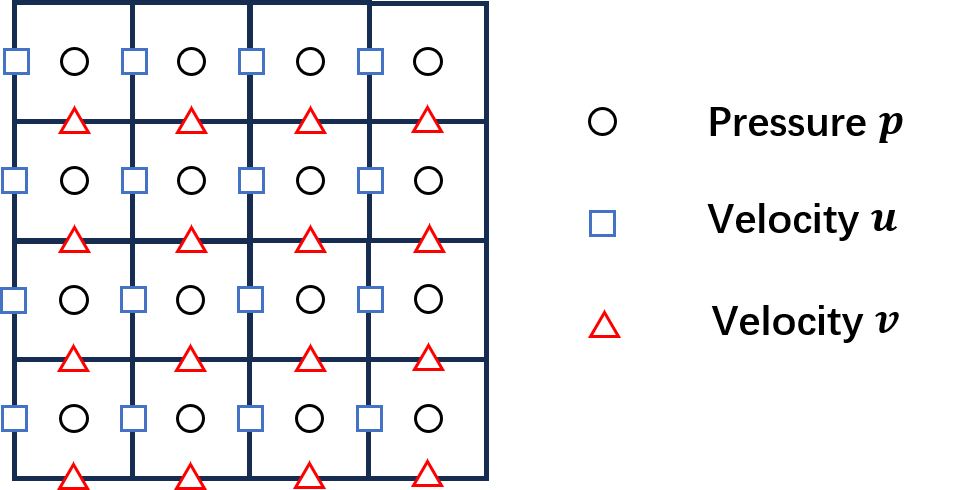}
		\caption{Staggered grid $(N_x=N_y=4)$ with positions of unknowns $\bm{u} = (u, v)^T$ and $p$.}\label{fig: mesh}
	\end{figure}

	Next, we introduce some differentiation matrices as follows:
	\begin{align*}
		&\mathbb{C}(N)=\frac{1}{2}\begin{pmatrix}
			1 & 1 &&& \\
			& 1 &1&&\\
			&  & \ddots &\ddots & \\
			&& &1&1\\
			1&&&&1\\
		\end{pmatrix}_{N\times N}, \quad
		\mathbb{D}_1(N,h)=
		\frac{1}{ 2 h}\begin{pmatrix}
			0 & 1 &&&-1\\
			-1 & 0 &1&&\\
			&  \ddots& \ddots & \ddots&\\
			&&-1&0&1\\
			1&&&-1 &0\\
		\end{pmatrix}_{N\times N}, \\[0.5cm]
		&\bbD_2(N,h)=\frac{1}{h}\begin{pmatrix}
			-1 & 1 &&& \\
			& -1 &1&&\\
			&  & \ddots &\ddots & \\
			&& &-1&1\\
			1&&&&-1\\
		\end{pmatrix}_{N\times N}, \quad
		\bbD_3(N,h)=
		\frac{1}{ h^2}\begin{pmatrix}
			-2 & 1 &&&1\\
			1 & -2 &1&&\\
			&  \ddots& \ddots & \ddots&\\
			&&1&-2&1\\
			1&&&1 &-2\\
		\end{pmatrix}_{N\times N}.	
	\end{align*}
	Based on the above definitions, we denote
	\begin{align*}
		&\bbA_1=\bbC(N_x),\quad \bbA_2 = \bbD_1(N_x,h_x), \quad \bbA_3 = \bbD_2(N_x,h_x),\quad \bbA_4 = \bbD_3(N_x,h_x),\\[0.15cm]
		&\bbB_1 = \bbC(N_y),\quad \bbB_2=\bbD_1(N_y,h_y), \quad \bbB_3=\bbD_2(N_y,h_y), \quad \bbB_4=\bbD_3(N_y,h_y).
	\end{align*}
	Then the discrete divergence operator $\nabla_d \cdot (\bullet)$: $[V_{ew};V_{ns}] \rightarrow V_c$ is define as 
	\begin{align*}
		\nabla_d \cdot \bm{U}=\bbA_3 U +  V \bbB_3^T,\quad \forall \bm{U} = [U;V]\in [V_{ew};V_{ns}].
	\end{align*}
	And the discrete gradient operator  $\nabla_D(\bullet)$: $V_c \rightarrow [V_{ew}; V_{ns}]$ is defined as
	\begin{align*}
		\nabla_D P=[-\bbA_3^T P; - P \bbB_3], \quad \forall P\in V_c.
	\end{align*}
	Additionally, the discrete Laplacian operator $\Delta_h (\bullet)$ : $V_{ew}\cup V_{ns} \cup V_c \rightarrow V_{ew}\cup V_{ns} \cup V_c$ is defined as
	\begin{align*}
		\Delta_h W = \bbA_4 W + W\bbB_4^T,\quad \forall W\in V_{ew}\cup V_{ns} \cup V_c. 
	\end{align*}
	For $\bm{U} = [U;V] \in [V_{ew};V_{ns}]$, we assume that $\bm{F}(\bm{U}) \in [V_{ew};V_{ns}]$ satisfies 
	\begin{equation*}
		\big(\bm{F}(\bm{U}), \bm{U}\big)_h \neq 0, \quad \forall \|\bm{U}\|_h \neq 0,
	\end{equation*}
	and then define $\bm{G}(\bm{U}) = [G_1(\bm{U}); G_2(\bm{U})] \in [V_{ew};V_{ns}]$ with elements
	\begin{equation*}
		G_1(\bm{U}) = \begin{cases}
			\frac{U\odot (\mathbb{A}_2 U) - \big((\mathbb{A}_1^T V)\odot (U\mathbb{B}_3)\big)\mathbb{B}_1^T}{(\bm{F}(\bm{U}),\bm{U})_h}, & \|\bm{U}\|_h \neq 0,\\
			\bm{0}, & \text{otherwise},
		\end{cases}
	\end{equation*}
	and
	\begin{equation*}
		G_2(\bm{U}) = \begin{cases}
			\frac{V\odot (V \bbB_2^T) - \bbA_1\big((\bbA_3^T V) \odot (U\bbB_1)\big)}{(\bm{F}(\bm{U}),\bm{U})_h}, & \|\bm{U}\|_h \neq 0,\\
			\bm{0}, & \text{otherwise},
		\end{cases}
	\end{equation*}
	where the notation $\odot$ denotes the Hadamard product of any two matrices. Let 
	\begin{align*}
		\bm{B}(\bm{U},\bm{V})=\big(\bm{F}(\bm{U}),\bm{V}\big)_h \bm{G}(\bm{U})-\big(\bm{G}(\bm{U}),\bm{V}\big)_h \bm{F}(\bm{U}).
	\end{align*}
	We note that $\bm{F}(\bm{U})$, $\bm{G}(\bm{U})$ and $\bm{B}(\bm{U},\bm{V})$ represent the spatial discretization approximations of $\bm{F}(\bm{u})$, $\bm{G}(\bm{u})$ and $\bm{B}(\bm{u},\bm{v})$, respectively. Then we have the following lemme.
	
	\begin{lem}\label{convectproperty3}
		For $\bm{U},~\bm{V}\in [V_{ew};V_{ns}]$ and $P\in V_{c},$ there exist the following identities
		\begin{equation}
			\big(\bm{B}(\bm{U},\bm{V}),\bm{V}\big)_h=0, \quad (\nabla_D P, \bm{U})_h = -( P, \nabla_d \cdot \bm{U})_h.
		\end{equation}
		Furthermore, it holds for $\nabla_d \cdot \bm{U} = \bm{0}$ that
		\begin{equation}\label{property0}
			(\nabla_D P, \bm{U})_h = 0.
		\end{equation}
	\end{lem}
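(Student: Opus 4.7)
The plan is to mirror the continuous proof of Lemma \ref{convectproperty2} for the first identity, establish the second identity by a discrete summation-by-parts argument using the trace-based definition of $(\cdot,\cdot)_h$ together with the relation $\nabla_D = -(\nabla_d\cdot)^\ast$ encoded in the matrices $\bbA_3$ and $\bbB_3$, and then obtain \eqref{property0} as a direct corollary.

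\textbf{Step 1 (the $\bm{B}$-identity).} I would simply take the discrete $l^2$ inner product of
$$\bm{B}(\bm{U},\bm{V}) = \bigl(\bm{F}(\bm{U}),\bm{V}\bigr)_h\,\bm{G}(\bm{U}) - \bigl(\bm{G}(\bm{U}),\bm{V}\bigr)_h\,\bm{F}(\bm{U})$$
with $\bm{V}$. Since $(\bm{F}(\bm{U}),\bm{V})_h$ and $(\bm{G}(\bm{U}),\bm{V})_h$ are scalars, they pull out of the inner product and the two resulting terms cancel identically. This is the discrete analogue of the continuous proof and requires no structural property beyond bilinearity of $(\cdot,\cdot)_h$.

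\textbf{Step 2 (discrete summation by parts).} Writing $\bm{U}=[U;V]$ and using the definitions $\nabla_D P = [-\bbA_3^T P;\,-P\bbB_3]$ and $\nabla_d\cdot \bm{U} = \bbA_3 U + V\bbB_3^T$, I would expand
\begin{align*}
(\nabla_D P,\bm{U})_h = (-\bbA_3^T P, U)_h + (-P\bbB_3, V)_h.
\end{align*}
Then I would apply $(A,B)_h = h_xh_y\,\mathrm{Tr}(A^T B)$ and the cyclic property of the trace: the first term becomes $-h_xh_y\,\mathrm{Tr}(P^T \bbA_3 U) = -(P,\bbA_3 U)_h$, and for the second term $\mathrm{Tr}(\bbB_3^T P^T V) = \mathrm{Tr}(P^T V \bbB_3^T)$, yielding $-(P,V\bbB_3^T)_h$. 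Adding these recovers $-(P,\nabla_d\cdot \bm{U})_h$. This is the only step where one has to touch the grid-level structure, so I expect it to be the main (though still routine) obstacle; the key point is making sure the adjoint relation $\bbA_3^\ast = -\bbA_3^T$-style bookkeeping is done on the right side (left for $U$-indices, right for $V$-indices), which is dictated by the staggered layout in Figure \ref{fig: mesh}.

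\textbf{Step 3 (the divergence-free corollary).} Substituting $\nabla_d\cdot\bm{U}=\bm{0}$ into the identity established in Step 2 immediately yields $(\nabla_D P,\bm{U})_h = 0$, which is \eqref{property0}. No further work is needed.
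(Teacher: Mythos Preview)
Your proposal is correct and follows essentially the same approach as the paper: the $\bm{B}$-identity by cancellation of the two scalar-times-inner-product terms, the summation-by-parts identity via the trace definition of $(\cdot,\cdot)_h$ (the paper packages your trace manipulations as the two auxiliary identities $(AX,Y)_h=(X,A^TY)_h$ and $(XB,Y)_h=(X,YB^T)_h$), and the corollary by direct substitution.
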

	\begin{proof}
		Through a straightforward calculation, we can deduce
		\begin{align*}
			\big(\bm{B}(\bm{U},\bm{V}),\bm{V}\big)_h=\big(\bm{F}(\bm{U}),\bm{V}\big)_h\cdot \big(\bm{G}(\bm{U}),\bm{V}\big)_h-\big(\bm{G}(\bm{U}),\bm{V}\big)_h\cdot \big(\bm{F}(\bm{U}), \bm{V}\big)_h=0.
		\end{align*}
		For any matrices $A$, $B$, $X$ and $Y$, we have 
		\begin{align}
			&(A X, Y)_h=h_x h_y \mathrm{Tr}((AX)^TY)=h_xh_y\mathrm{Tr}(X^TA^T Y)=(X,A^TY)_h,\label{MP1}\\
			&(XB, Y)_h=(B^TX^T, Y^T)_h =(X^T, BY^T)_h =(X,YB^T)_h\label{MP2}.
		\end{align}
		Using \eqref{MP1} and \eqref{MP2} yields
		\begin{align*}
			(\nabla_D P, \bm{U})_h=&(-\bbA_3^T P,U)_h+(- P \bbB_3,V)_h\\=&(- P,\bbA_3 U)_h+(- P ,V\bbB_3^T)_h\\= &-( P, \nabla_d \cdot \bm{U})_h.
		\end{align*}
		Therefore, we further arrive at \eqref{property0} for the case where $\nabla_d \cdot \bm{U} = \bm{0}$. This completes the proof.
	\end{proof}
	
	\subsection{Fully discrete CN schemes}
	Applying staggered-grid finite difference methods in space to the systems \eqref{CN1scheme} and \eqref{CN2scheme}, respectively, we derive the fully discrete CN1 and CN2 schemes as follows.
	\begin{sch}[CN1]\label{sch:FCN1}
		Given $\bm{U}^0\in [V_{ew};V_{ns}]$, we compute $\bm{U}^{n+1}\in [V_{ew};V_{ns}]$ for $0\leq n \leq N_t-1$ via
		\begin{subequations}
			\label{fullyfirstorderscheme1}
			\begin{numcases}{} 
				\frac{\bm{U}^{n+1}-\bm{U}^n}{\tau}-\nu \Delta_h \bm{U}^{n+\frac{1}{2}}+\bm{B}(\bm{U}^{n},\bm{U}^{n+\frac{1}{2}})+\nabla_D P^{n+\frac{1}{2}}=\mathscr{F}^{n+\frac{1}{2}},\label{fullyfirstorderscheme1a}\\[0.15cm]
				\nabla_d \cdot \bm{U}^{n+\frac{1}{2}}=\bm{0},
			\end{numcases}
		\end{subequations}
		where $\bm{U}^{n+\frac{1}{2}}=(\bm{U}^n+\bm{U}^{n+1})/2$, and $\mathscr{F}^{n+\frac{1}{2}}$ represents the spatial discretization of the external force $\bm{f}^{n+\frac{1}{2}}$.
	\end{sch}
	
	\begin{sch}[CN2]\label{sch:FCN2}
		Given $\bm{U}^0\in [V_{ew};V_{ns}]$, and $\bm{U}^1\in [V_{ew};V_{ns}]$ is obtained using the fully discrete CN1 scheme, we compute $\bm{U}^{n+1}\in [V_{ew};V_{ns}]$ for $1\leq n \leq N_t-1$ via
		\begin{subequations}
			\label{fullyfirstorderscheme2}
			\begin{numcases}{} 
				\frac{\bm{U}^{n+1}-\bm{U}^n}{\tau}-\nu \Delta_h \bm{U}^{n+\frac{1}{2}}+\bm{B}(\overline{\bm{U}}^{n+\frac{1}{2}},\bm{U}^{n+\frac{1}{2}})+\nabla_D P^{n+\frac{1}{2}}=\mathscr{F}^{n+\frac{1}{2}},\label{fullyfirstorderscheme2a}\\[0.15cm]
				\nabla_d \cdot \bm{U}^{n+\frac{1}{2}}=\bm{0},
			\end{numcases}
		\end{subequations}
		where $\overline{\bm{U}}^{n+\frac{1}{2}}=(3\bm{U}^{n}-\bm{U}^{n-1})/2$.
	\end{sch}
	
	\begin{thm}
		In the absence of the external force, both the fully discrete CN1 scheme \eqref{fullyfirstorderscheme1} and the fully discrete CN2 scheme \eqref{fullyfirstorderscheme2} satisfy the following discrete energy dissipation law
		\begin{equation}\label{fullyenergydissipation1}
			\frac{E_h^{n+1}-E_h^n}{\tau}=\nu(\Delta_h \bm{U}^{n+\frac{1}{2}},\bm{U}^{n+\frac{1}{2}})_h, 
		\end{equation}
		where the fully discrete energy at $t = t_n$ is defined as
		\begin{align}
			E_h^n=\frac{1}{2}\| \bm{U}^n \|^2_h.
		\end{align}
	\end{thm}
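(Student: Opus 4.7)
The plan is to mimic the continuous-level proof of Theorem \ref{thm:EDL} and the semi-discrete argument of Theorem \ref{thm:CN_EDL}, now with every operation justified by the fully discrete identities collected in Lemma \ref{convectproperty3}. Since both CN1 and CN2 have an identical structural form after one identifies the first argument of $\bm{B}(\cdot,\bm{U}^{n+\frac{1}{2}})$ (either $\bm{U}^n$ or $\overline{\bm{U}}^{n+\frac{1}{2}}$), I would handle them simultaneously: let $\bm{W}$ denote that first argument and treat \eqref{fullyfirstorderscheme1a} and \eqref{fullyfirstorderscheme2a} as the single equation
\begin{equation*}
	\frac{\bm{U}^{n+1}-\bm{U}^n}{\tau}-\nu \Delta_h \bm{U}^{n+\frac{1}{2}}+\bm{B}(\bm{W},\bm{U}^{n+\frac{1}{2}})+\nabla_D P^{n+\frac{1}{2}}=\bm{0}.
\end{equation*}

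First I would take the discrete $l^2$ inner product of this equation with $\bm{U}^{n+\frac{1}{2}}$. For the time-difference term I would use the elementary identity $(\bm{U}^{n+1}-\bm{U}^n,\bm{U}^n+\bm{U}^{n+1})_h=\|\bm{U}^{n+1}\|_h^2-\|\bm{U}^n\|_h^2$, so that
\begin{equation*}
\left(\frac{\bm{U}^{n+1}-\bm{U}^n}{\tau},\bm{U}^{n+\frac{1}{2}}\right)_h
=\frac{1}{2\tau}\bigl(\|\bm{U}^{n+1}\|_h^2-\|\bm{U}^n\|_h^2\bigr)
=\frac{E_h^{n+1}-E_h^n}{\tau}.
\end{equation*}

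Next I would handle the remaining three terms using Lemma \ref{convectproperty3}. The nonlinear contribution $\bigl(\bm{B}(\bm{W},\bm{U}^{n+\frac{1}{2}}),\bm{U}^{n+\frac{1}{2}}\bigr)_h$ vanishes by the first identity in that lemma (this is precisely the discrete counterpart of \eqref{key_convectproperty2} that was engineered into the definition of $\bm{B}$). The pressure-gradient contribution $\bigl(\nabla_D P^{n+\frac{1}{2}},\bm{U}^{n+\frac{1}{2}}\bigr)_h$ vanishes because $\nabla_d\cdot\bm{U}^{n+\frac{1}{2}}=\bm{0}$, via the summation-by-parts identity \eqref{property0} in the same lemma. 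Collecting the surviving terms immediately yields \eqref{fullyenergydissipation1}.

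There is no serious obstacle here: the two critical cancellations—the nonlinear one and the pressure one—have already been packaged in Lemma \ref{convectproperty3}, and the time-difference identity is standard. The only point requiring minor care is to be explicit that both CN1 and CN2 share the argument structure $\bm{B}(\cdot,\bm{U}^{n+\frac{1}{2}})$ in the second slot, so that Lemma \ref{convectproperty3} applies uniformly regardless of whether the first slot is $\bm{U}^n$ or $\overline{\bm{U}}^{n+\frac{1}{2}}$; once that observation is made, a single line of computation finishes the proof for both schemes at once.
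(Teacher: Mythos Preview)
Your proposal is correct and follows essentially the same approach as the paper: take the discrete inner product of the momentum equation with $\bm{U}^{n+\frac{1}{2}}$, use the polarization identity for the time-difference term, and invoke Lemma \ref{convectproperty3} to kill both the nonlinear and pressure contributions. Your device of writing $\bm{W}$ for the first argument of $\bm{B}$ to treat CN1 and CN2 simultaneously is a minor expository convenience; the paper simply refers to both equations at once.
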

	
	\begin{proof}
		Similar to the proof of Theorem \ref{thm:CN_EDL}, taking the discrete inner product of \eqref{fullyfirstorderscheme1a} or \eqref{fullyfirstorderscheme2a}  with $\bm{U}^{n+\frac{1}{2}}$ and using Lemma \ref{convectproperty3}, we have
		\begin{align*}
			\frac{E_h^{n+1}-E_h^n}{\tau}=\frac{\| \bm{U}^{n+1}\|^2_h-\| \bm{U}^{n}\|^2_h}{2\tau}=\left(\frac{\bm{U}^{n+1}-\bm{U}^n}{\tau}, \bm{U}^{n+\frac{1}{2}}\right)=\nu(\Delta_h \bm{U}^{n+\frac{1}{2}},\bm{U}^{n+\frac{1}{2}})_h\leq 0.
		\end{align*}
	\end{proof}
	
	\begin{thm}\label{thm:CNunique}
		For $\nu, ~\tau > 0$, under the solvability condition $(P^{n+\frac{1}{2}},1)_h = 0$, both fully discrete CN schemes are uniquely solvable.
	\end{thm}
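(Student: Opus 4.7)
The key observation is that both schemes \eqref{fullyfirstorderscheme1} and \eqref{fullyfirstorderscheme2} are genuinely \emph{linear} in the unknowns $(\bm{U}^{n+1}, P^{n+\frac{1}{2}})$, since the first argument of $\bm{B}(\cdot, \cdot)$ is a frozen coefficient built from already-computed data and $\bm{B}$ is linear in its second slot. Because the degrees of freedom are finite and the scalar normalization $(P^{n+\frac{1}{2}}, 1)_h = 0$ supplies exactly the one extra condition needed to account for the null space induced by the periodic boundary condition, unique solvability reduces to showing that the associated homogeneous problem admits only the trivial solution. The plan is to carry out this reduction and then exploit a discrete energy estimate in the spirit of Theorem \ref{thm:CN_EDL}.

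Denote the frozen first argument by $\bm{W}$ (either $\bm{U}^n$ for CN1 or $\overline{\bm{U}}^{n+\frac{1}{2}}$ for CN2) and write $\bm{V} := \bm{U}^{n+\frac{1}{2}}$. Setting $\bm{U}^n = \bm{0}$ (and $\bm{U}^{n-1} = \bm{0}$ for CN2) together with $\mathscr{F}^{n+\frac{1}{2}} = \bm{0}$, the homogeneous system becomes
\begin{align*}
\frac{2\bm{V}}{\tau} - \nu \Delta_h \bm{V} + \bm{B}(\bm{W}, \bm{V}) + \nabla_D P^{n+\frac{1}{2}} = \bm{0}, \qquad \nabla_d \cdot \bm{V} = \bm{0}.
\end{align*}
Taking the discrete inner product of the momentum equation with $\bm{V}$ and invoking Lemma \ref{convectproperty3} kills both the convection term, since $\big(\bm{B}(\bm{W}, \bm{V}), \bm{V}\big)_h = 0$, and the pressure-gradient term, since $(\nabla_D P^{n+\frac{1}{2}}, \bm{V})_h = 0$ by the divergence-free constraint. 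This leaves
\begin{align*}
\frac{2}{\tau}\|\bm{V}\|_h^2 - \nu (\Delta_h \bm{V}, \bm{V})_h = 0,
\end{align*}
and a routine discrete summation-by-parts identity for the symmetric circulant matrices $\bbA_4, \bbB_4$ shows that $(\Delta_h \bm{V}, \bm{V})_h \leq 0$. With $\tau, \nu > 0$, both terms are non-negative and must vanish separately, forcing $\bm{V} = \bm{0}$ and hence $\bm{U}^{n+1} = \bm{0}$.

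Substituting $\bm{V} = \bm{0}$ back into the momentum equation then gives $\nabla_D P^{n+\frac{1}{2}} = \bm{0}$, i.e., $\bbA_3^{T} P^{n+\frac{1}{2}} = \bm{0}$ and $P^{n+\frac{1}{2}}\bbB_3 = \bm{0}$. The step I expect to be the main obstacle is verifying that the null spaces of the circulant forward-difference matrices $\bbA_3$ and $\bbB_3$ are exactly the constant vectors, so that $P^{n+\frac{1}{2}}$ must itself be a constant matrix; I would dispatch this with a one-line Fourier/eigenvalue computation for $\bbD_2(N,h)$. Once this kernel characterization is in hand, the normalization $(P^{n+\frac{1}{2}}, 1)_h = 0$ immediately forces $P^{n+\frac{1}{2}} \equiv 0$, completing the argument. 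Everything else is a direct discrete transcription of the continuous energy cancellation already used in Theorem \ref{thm:CN_EDL}.
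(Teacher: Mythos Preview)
Your proposal is correct and follows essentially the same route as the paper: pass to the homogeneous linear system (keeping the frozen coefficient $\bm{W}$ in $\bm{B}(\bm{W},\cdot)$), test against $\bm{V}=\bm{U}^{n+\frac{1}{2}}$, use Lemma \ref{convectproperty3} to eliminate the convection and pressure contributions, and then exploit the semi-negative definiteness of $\Delta_h$ together with the mean-zero normalization on $P$. The paper's proof is identical in structure (it invokes Theorem~5.1 of \cite{GONG2018SIAM} for the reduction to the homogeneous system) and is in fact less explicit than you are about why $\nabla_D P = \bm{0}$ forces $P$ to be constant.
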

	
	\begin{proof}
		Here, we only prove the existence and uniqueness of the solution for Scheme \ref{sch:FCN2}. For Scheme \ref{sch:FCN1}, it is similar and thus omitted. For brevity, we denote $\bm{U}:=\bm{U}^{n+\frac{1}{2}},~ \overline{\bm{U}} := \overline{\bm{U}}^{n+\frac{1}{2}},$ etc. Then the linear system \eqref{fullyfirstorderscheme2} can be written as  
		\begin{subequations}\label{Uniquesystem}
			\begin{numcases}{} 
				\frac{2(\bm{U}-\bm{U}^n)}{\tau}-\nu \Delta_h \bm{U}+\bm{B}(\overline{\bm{U}},\bm{U})+
				\nabla_D P=\mathscr{F} , \\[0.15cm]
				\nabla_d \cdot \bm{U}=\bm{0},
			\end{numcases}
		\end{subequations}
		whose homogeneous linear system reads as
		\begin{subequations}\label{Uniquesystem1}
			\begin{numcases}{} 
				\frac{2}{\tau}\bm{U}-\nu \Delta_h \bm{U}+\bm{B}(\overline{\bm{U}},\bm{U})+
				\nabla_D P=\bm{0} , \label{Uniquesystem1a} \\[0.15cm]
				\nabla_d \cdot \bm{U}=\bm{0}.
			\end{numcases}
		\end{subequations}
		Following Theorem 5.1 in Ref.  \cite{GONG2018SIAM}, we need only to prove that the homogeneous linear system \eqref{Uniquesystem1} with $(P,1)_h = 0$ admits only the zero solution. Taking the discrete inner product of \eqref{Uniquesystem1a}  with $\bm{U}$ and using Lemma \ref{convectproperty3}, we can derive
		$$\frac{2}{\tau}\|\bm{U}\|_h^2-\nu (\Delta_h \bm{U}, \bm{U})_h = 0.$$
		Since $\Delta_h$ is a semi-negative definite operator and $\nu, ~\tau > 0$, it follows that
		$$\bm{U}=\bm{0}.$$
		Furthermore, we can derive from \eqref{Uniquesystem1a}  that
		$$\nabla_D P = \bm{0}.$$
		Under the solvability condition $(P,1)_h = 0$, we obtain $$P = \bm{0}.$$
		This completes the proof.
	\end{proof}
	
	In what follows, we present an explicit and efficient implementation of the fully discrete CN2 scheme. The fully discrete CN1 scheme can be solved similarly, and the details are omitted for brevity. We denote
	\begin{align*}
		\alpha=(\bm{F}(\overline{\bm{U}}^{n+\frac{1}{2}}),\bm{U}^{n+\frac{1}{2}})_h,\quad \beta=(\bm{G}(\overline{\bm{U}}^{n+\frac{1}{2}}),\bm{U}^{n+\frac{1}{2}})_h,
	\end{align*}
	and then have
	\begin{align*}
		\bm{B}(\overline{\bm{U}}^{n+\frac{1}{2}},\bm{U}^{n+\frac{1}{2}})=\alpha \bm{G}(\overline{\bm{U}}^{n+\frac{1}{2}})-\beta \bm{F}(\overline{\bm{U}}^{n+\frac{1}{2}}).
	\end{align*}
	Therefore, the linear system \eqref{fullyfirstorderscheme2} can be reformulated as
	\begin{subequations}
		\begin{numcases}{} 
			\frac{2(\bm{U}^{n+\frac{1}{2}}-\bm{U}^n)}{\tau}-\nu \Delta_h \bm{U}^{n+\frac{1}{2}}+\alpha \bm{G}(\overline{\bm{U}}^{n+\frac{1}{2}})-\beta \bm{F}(\overline{\bm{U}}^{n+\frac{1}{2}})+\nabla_D P^{n+\frac{1}{2}}=\mathscr{F}^{n+\frac{1}{2}}, \\[0.15cm]
			\nabla_d \cdot \bm{U}^{n+\frac{1}{2}}=\bm{0}.
		\end{numcases}
	\end{subequations}
	Similar to the semi-discrete system, we decompose the solutions as follows
	\begin{subequations}
		\label{fullydecompose}
		\begin{numcases}{} 
			\bm{U}^{n+\frac{1}{2}}=\alpha \bm{U}^{n+\frac{1}{2}}_1+\beta \bm{U}^{n+\frac{1}{2}}_2+\bm{U}^{n+\frac{1}{2}}_3,\label{fullydecomposea}\\[0.15cm]
			P^{n+\frac{1}{2}}=\alpha P^{n+\frac{1}{2}}_1+\beta P^{n+\frac{1}{2}}_2+P^{n+\frac{1}{2}}_3,\label{fullydecomposeb}
		\end{numcases}
	\end{subequations}
	where $\bm{U}^{n+\frac{1}{2}}_i$ and $P^{n+\frac{1}{2}}_i\ (i=1,2,3)$ satisfy the following three subsystems
	\begin{subequations}
		\label{implesystemfully}
		\begin{numcases}{} 
			\frac{2}{\tau}\bm{U}^{n+\frac{1}{2}}_i-\nu \Delta_h \bm{U}_i^{n+\frac{1}{2}}+\nabla_D P^{n+\frac{1}{2}}_i=\bm{M}^{n}_i, \label{implesystemfullya}\\[0.15cm]
			\nabla_d \cdot \bm{U}^{n+\frac{1}{2}}_i=\bm{0},\label{implesystemfullyb}
		\end{numcases}
	\end{subequations}
	with  
	\begin{align}
		\bm{M}^{n}_1=-\bm{G}(\overline{\bm{U}}^{n+\frac{1}{2}}), \quad \bm{M}^{n}_2=\bm{F}(\overline{\bm{U}}^{n+\frac{1}{2}}), \quad \bm{M}^{n}_3=\frac{2}{\tau}\bm{U}^n+\mathscr{F}^{n+\frac{1}{2}}.
	\end{align}
	Applying the discrete divergence operator $\nabla_d \cdot (\bullet)$ to \eqref{implesystemfullya} and utilizing \eqref{implesystemfullyb}, we obtain
	\begin{align}\label{implesystemfullyP}
		\Delta_h P^{n+\frac{1}{2}}_i=\nabla_d \cdot \bm{M}^{n}_i.
	\end{align}
	It is noted that the system \eqref{implesystemfullyP} under the condition $(P^{n+\frac{1}{2}}_i,1)_h = 0$ is uniquely solvable and can be efficiently solved using the FFT algorithm \cite{JUJSC2015,WangJSC2020}. Once $P^{n+\frac{1}{2}}_i$ is determined, we can compute $\bm{U}^{n+\frac{1}{2}}_i$ via 
	\begin{align}\label{explicitimple}
		\Big(\frac{2}{\tau}-\nu \Delta_h \Big) \bm{U}^{n+\frac{1}{2}}_i = \bm{M}^{n}_i-\nabla_DP^{n+\frac{1}{2}}_i,
	\end{align}
	which is also uniquely solvable and can be efficiently solved using the FFT algorithm. Once $\bm{U}_i^{n+\frac{1}{2}}~(i=1,2,3)$ are obtained, the coefficients $\alpha$ and $\beta$ can be computed by solving the following $2 \times 2$ linear system
	\begin{align}\label{MatrixequationFULLY}
		A x =	b,
	\end{align}
	where 
	\begin{align*}
		A = \begin{pmatrix}
			1-(\bm{F}(\overline{\bm{U}}^{n+\frac{1}{2}}),\bm{U}^{n+\frac{1}{2}}_1)_h & -(\bm{F}(\overline{\bm{U}}^{n+\frac{1}{2}}),\bm{U}^{n+\frac{1}{2}}_2)_h\\
			-(\bm{G}(\overline{\bm{U}}^{n+\frac{1}{2}}),\bm{U}^{n+\frac{1}{2}}_1)_h &  1-(\bm{G}(\overline{\bm{U}}^{n+\frac{1}{2}}),\bm{U}^{n+\frac{1}{2}}_2)_h
		\end{pmatrix},\quad
		x = \begin{pmatrix}
			\alpha\\
			\beta
		\end{pmatrix},\quad b = \begin{pmatrix}
			(\bm{F}(\overline{\bm{U}}^{n+\frac{1}{2}}),\bm{U}^{n+\frac{1}{2}}_3)_h\\
			(\bm{G}(\overline{\bm{U}}^{n+\frac{1}{2}}),\bm{U}^{n+\frac{1}{2}}_3)_h
		\end{pmatrix}.
	\end{align*}
	After solving $\alpha$ and $\beta$, $\bm{U}^{n+\frac{1}{2}}$ and $P^{n+\frac{1}{2}}$ can be updated by utilizing \eqref{fullydecomposea} and \eqref{fullydecomposeb}, respectively. Finally, we compute 
	\begin{align}\label{UpdatefinalCNFULLY}
		\bm{U}^{n+1} = 2\bm{U}^{n+\frac{1}{2}} - \bm{U}^n.
	\end{align}
	The implementation of our fully discrete CN2 scheme is summarized in Algorithm \ref{power2}.
	\begin{algorithm}[H]
		\renewcommand{\algorithmicrequire}{\textbf{Date:}}
		\renewcommand{\algorithmicensure}{\textbf{Result:}}
		\caption{Pseudocode outlining the implementation of Scheme \ref{sch:FCN2} for each time step}
		\label{power2}
		\begin{algorithmic}[1] 
			\REQUIRE  Input $\bm{U}^{n-1}$, $\bm{U}^{n}$; 
			\STATE Calculate $P_i^{n+\frac{1}{2}}$ by solving \eqref{implesystemfullyP};
			\STATE Calculate $\bm{U}_i^{n+\frac{1}{2}}$ by solving \eqref{explicitimple};
			\STATE Calculate $\alpha$ and $\beta$ by solving \eqref{MatrixequationFULLY};
			\STATE Update $\bm{U}^{n+\frac{1}{2}}$ and $P^{n+\frac{1}{2}}$ using \eqref{fullydecomposea} and \eqref{fullydecomposeb}, respectively;
			\STATE Update $\bm{U}^{n+1}$ using \eqref{UpdatefinalCNFULLY};
			\ENSURE Output $\bm{U}^{n+1}$. 
		\end{algorithmic}
	\end{algorithm}
	
	\subsection{Fully discrete BDF schemes}
	
	Applying staggered-grid finite difference methods in space to the systems \eqref{BDF1scheme} and \eqref{BDF2scheme}, respectively, we derive the fully discrete BDF1 and BDF2 schemes as follows.
	\begin{sch}[BDF1]\label{sch:FBDF1}
		Given $\bm{U}^0 \in [V_{ew};V_{ns}]$, we compute $\bm{U}^{n+1} \in [V_{ew};V_{ns}]$ for $0\leq n \leq N_t-1$ via
		\begin{subequations}
			\label{fullyfirstorderscheme4}
			\begin{numcases}{} 
				\frac{\bm{U}^{n+1}-\bm{U}^n}{\tau}-\nu \Delta_h \bm{U}^{n+1}+\bm{B}(\bm{U}^n,\bm{U}^{n+1})+\nabla_D P^{n+1}=\mathscr{F}^{n+1},\label{fullyfirstorderscheme4a}\\[0.15cm]
				\nabla_d \cdot \bm{U}^{n+1}=\bm{0},
			\end{numcases}
		\end{subequations}
		where $\mathscr{F}^{n+1}$ represents the spatial discretization of the external force $\bm{f}^{n+1}$.
	\end{sch}
	
	
	\begin{sch}[BDF2]\label{sch:FBDF2}
		Given $\bm{U}^0 \in [V_{ew};V_{ns}]$, and $\bm{U}^1 \in [V_{ew};V_{ns}]$ is obtained using the fully discrete BDF1 scheme, we compute $\bm{U}^{n+1} \in [V_{ew};V_{ns}]$ for $1\leq n \leq N_t-1$ via
		\begin{subequations}
			\label{fullyfirstorderscheme3}
			\begin{numcases}{} 
				\frac{3\bm{U}^{n+1}-4\bm{U}^n+\bm{U}^{n-1}}{2\tau}-\nu \Delta_h \bm{U}^{n+1}+\bm{B}(\overline{\bm{U}}^{n+1},\bm{U}^{n+1})+\nabla_D P^{n+1}=\mathscr{F}^{n+1},\label{fullyfirstorderscheme3a}\\[0.15cm]
				\nabla_d \cdot \bm{U}^{n+1}=\bm{0},
			\end{numcases}
		\end{subequations}
		where $\overline{\bm{U}}^{n+1}=2\bm{U}^n-\bm{U}^{n-1}$.
	\end{sch}

	\begin{thm}\label{fullyfirstorderenergydissipation}
		In the absence of the external force, the fully discrete BDF1 scheme \eqref{fullyfirstorderscheme4} preserves the discrete energy dissipation law 
		\begin{equation}
			\frac{E_h^{n+1}-E_h^n}{\tau}=\nu(\Delta_h \bm{U}^{n+1},\bm{U}^{n+1})_h-\frac{1}{2\tau}\| \bm{U}^{n+1}-\bm{U}^n\|^2_h,
		\end{equation}
		where the fully discrete energy of BDF1 scheme  at $t = t_n$ is defined as
		\begin{align}
			E_h^n=\frac{1}{2}\| \bm{U}^n \|^2_h.
		\end{align}
		Additionally, the fully discrete BDF2 scheme \eqref{fullyfirstorderscheme3} satisfies the discrete energy dissipation law 
		\begin{equation}
			\frac{\wh{E}_h^{n+1}-\wh{E}_h^n}{\tau}=\nu(\Delta_h \bm{U}^{n+1},\bm{U}^{n+1})_h-\frac{1}{4\tau}\| \bm{U}^{n+1}-2\bm{U}^n+\bm{U}^{n-1}\|^2_h,
		\end{equation}
		where the fully discrete energy of BDF2 scheme at $t = t_n$ is defined as
		\begin{align}
			\wh{E}_h^n=\frac{1}{4}\left(\| \bm{U}^n \|^2_h+\|2\bm{U}^n-\bm{U}^{n-1}\|^2_h\right).
		\end{align}
	\end{thm}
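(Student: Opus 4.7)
The plan is to mimic the semi-discrete BDF proofs essentially verbatim, replacing the continuous inner product by the discrete one and invoking Lemma \ref{convectproperty3} to kill the nonlinear and pressure contributions at the fully discrete level. More precisely, for each scheme I would take the discrete $(\cdot,\cdot)_h$ inner product of the momentum equation (\eqref{fullyfirstorderscheme4a} or \eqref{fullyfirstorderscheme3a}) with $\bm{U}^{n+1}$. Because $\nabla_d\cdot\bm{U}^{n+1}=\bm{0}$, Lemma \ref{convectproperty3} immediately yields $(\nabla_D P^{n+1},\bm{U}^{n+1})_h=0$. The zero-energy-contribution property $\big(\bm{B}(\overline{\bm{U}},\bm{U}^{n+1}),\bm{U}^{n+1}\big)_h=0$, also from Lemma \ref{convectproperty3}, disposes of the nonlinear term (taking $\overline{\bm{U}}=\bm{U}^n$ for BDF1 and $\overline{\bm{U}}=\overline{\bm{U}}^{n+1}$ for BDF2). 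What remains is a purely algebraic identity relating the BDF time derivative to the defined discrete energy.

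For the BDF1 scheme I would use the standard polarization identity
\begin{equation*}
\bigl(\bm{U}^{n+1}-\bm{U}^n,\bm{U}^{n+1}\bigr)_h=\tfrac{1}{2}\bigl(\|\bm{U}^{n+1}\|_h^2-\|\bm{U}^n\|_h^2+\|\bm{U}^{n+1}-\bm{U}^n\|_h^2\bigr),
\end{equation*}
divide by $\tau$, and rearrange to recover exactly the claimed BDF1 energy law, with the $-\tfrac{1}{2\tau}\|\bm{U}^{n+1}-\bm{U}^n\|_h^2$ term coming from the extra square on the right and the dissipation term $\nu(\Delta_h\bm{U}^{n+1},\bm{U}^{n+1})_h$ coming from the viscous term.

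For the BDF2 scheme the key is the somewhat less obvious identity
\begin{equation*}
\bigl(3\bm{U}^{n+1}-4\bm{U}^n+\bm{U}^{n-1},\,2\bm{U}^{n+1}\bigr)_h=\|\bm{U}^{n+1}\|_h^2-\|\bm{U}^n\|_h^2+\|2\bm{U}^{n+1}-\bm{U}^n\|_h^2-\|2\bm{U}^n-\bm{U}^{n-1}\|_h^2+\|\bm{U}^{n+1}-2\bm{U}^n+\bm{U}^{n-1}\|_h^2,
\end{equation*}
which is exactly the telescoping decomposition used in the semi-discrete analysis and which is designed so that the pair $\|\bm{U}^{n+1}\|_h^2+\|2\bm{U}^{n+1}-\bm{U}^n\|_h^2=4\wh{E}_h^{n+1}$ appears on the right. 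After taking the inner product of \eqref{fullyfirstorderscheme3a} with $\bm{U}^{n+1}$, multiplying by $2$, and substituting this identity, I would divide by $4\tau$ and reorganize to obtain the claimed BDF2 energy law, with the $-\tfrac{1}{4\tau}\|\bm{U}^{n+1}-2\bm{U}^n+\bm{U}^{n-1}\|_h^2$ contribution serving as the numerical dissipation characteristic of BDF2.

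The only nontrivial step is verifying the BDF2 algebraic identity; everything else is a direct transcription of the semi-discrete proof into the staggered-grid setting using Lemma \ref{convectproperty3}. Because this identity is just a finite-dimensional polynomial identity in $\bm{U}^{n-1},\bm{U}^n,\bm{U}^{n+1}$ under $(\cdot,\cdot)_h$, it follows from expanding each squared norm and collecting terms, with no additional structural assumption on the discretization. Hence no new difficulty arises at the fully discrete level relative to the semi-discrete BDF2 theorem already proved in Section \ref{time discretization}.
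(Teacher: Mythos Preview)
Your proposal is correct and follows essentially the same route as the paper: test the momentum equation against $\bm{U}^{n+1}$ in the discrete inner product, use Lemma~\ref{convectproperty3} to eliminate the $\bm{B}$ and $\nabla_D P$ terms, and then invoke the standard BDF1 polarization identity and the BDF2 telescoping identity $\bigl(3\bm{U}^{n+1}-4\bm{U}^n+\bm{U}^{n-1},2\bm{U}^{n+1}\bigr)_h=\|\bm{U}^{n+1}\|_h^2-\|\bm{U}^n\|_h^2+\|2\bm{U}^{n+1}-\bm{U}^n\|_h^2-\|2\bm{U}^n-\bm{U}^{n-1}\|_h^2+\|\bm{U}^{n+1}-2\bm{U}^n+\bm{U}^{n-1}\|_h^2$. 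The paper's proof only writes out the BDF2 case explicitly and declares the BDF1 case analogous, so your treatment is in fact slightly more detailed.
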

	
	\begin{proof}
		Here, we present the proof of the discrete energy dissipation law for the fully discrete BDF2 scheme. The proof for the fully discrete BDF1 scheme follows analogously, and thus the details are omitted for brevity.
		
		Taking the discrete inner product of \eqref{fullyfirstorderscheme3a} with $\bm{U}^{n+1}$ and using Lemma \ref{convectproperty3}, one can arrive at
		\begin{align*}
			\left(\frac{3\bm{U}^{n+1}-4\bm{U}^{n}+\bm{U}^{n-1}}{2\tau},\bm{U}^{n+1}\right)_h = \nu(\Delta_h \bm{U}^{n+1},\bm{U}^{n+1})_h.
		\end{align*}
		By applying the following identity
		\begin{align*}
			\left(3\bm{U}^{n+1}-4\bm{U}^{n}+\bm{U}^{n-1},2\bm{U}^{n+1}\right)_h=&\| \bm{U}^{n+1} \|^2_h-\| \bm{U}^{n}\|^2_h+ \| 2\bm{U}^{n+1} -\bm{U}^{n}\|^2_h- \| 2\bm{U}^{n} -\bm{U}^{n-1}\|^2_h\\ &+ \| \bm{U}^{n+1} -2\bm{U}^{n}+\bm{U}^{n-1}\|^2_h,
		\end{align*}
		we obtain
		\begin{align*}
			\frac{\wh{E}_h^{n+1}-\wh{E}_h^n}{\tau}=\nu(\Delta_h \bm{U}^{n+1},\bm{U}^{n+1})_h-\frac{1}{4\tau}\| \bm{U}^{n+1}-2\bm{U}^n+\bm{U}^{n-1}\|^2_h\leq 0.
		\end{align*}
	\end{proof}
	
	\begin{thm}\label{Uniquethm2}
		For $\nu,~ \tau > 0$,  under the solvability condition $(P^{n+1},1)_h=0$, both fully discrete BDF schemes are uniquely solvable.
	\end{thm}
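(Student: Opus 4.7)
The plan is to mirror the argument used for Theorem \ref{thm:CNunique} almost verbatim, since both BDF schemes are linear systems whose homogeneous parts have exactly the same structure as the CN case, differing only in the positive coefficient multiplying $\bm{U}^{n+1}$. First I would invoke Theorem 5.1 of Ref.~\cite{GONG2018SIAM} (as in the proof of Theorem \ref{thm:CNunique}) to reduce the problem of unique solvability of the linear system to showing that the associated homogeneous system admits only the trivial solution under the side condition $(P^{n+1},1)_h = 0$.

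For Scheme \ref{sch:FBDF1}, the homogeneous system reads
\begin{subequations}
\begin{numcases}{}
\tfrac{1}{\tau}\bm{U}-\nu \Delta_h \bm{U}+\bm{B}(\bm{U}^n,\bm{U})+\nabla_D P=\bm{0},\\[0.15cm]
\nabla_d \cdot \bm{U}=\bm{0},
\end{numcases}
\end{subequations}
while for Scheme \ref{sch:FBDF2}, replacing $1/\tau$ by $3/(2\tau)$ and using $\overline{\bm{U}}$ in place of $\bm{U}^n$ in the convective factor gives the corresponding homogeneous system. In each case I would take the discrete $l^2$ inner product of the momentum equation with $\bm{U}$ and apply Lemma \ref{convectproperty3}: the identity $(\bm{B}(\cdot,\bm{U}),\bm{U})_h=0$ annihilates the convective contribution, and the divergence-free constraint $\nabla_d\cdot\bm{U}=\bm{0}$ combined with the integration-by-parts identity in Lemma \ref{convectproperty3} annihilates the pressure term. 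What remains is
\begin{equation*}
c\,\|\bm{U}\|_h^2 - \nu\,(\Delta_h \bm{U},\bm{U})_h = 0,
\end{equation*}
where $c = 1/\tau$ or $c = 3/(2\tau)$, both strictly positive.

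Since $\Delta_h$ is a semi-negative definite operator and $\nu,\tau>0$, each term on the left is nonnegative, forcing $\bm{U}=\bm{0}$. Substituting back into the momentum equation then yields $\nabla_D P=\bm{0}$, and the solvability condition $(P,1)_h=0$ eliminates the constant-kernel ambiguity, giving $P=\bm{0}$. The main step worth any scrutiny is the very first one, verifying that Lemma \ref{convectproperty3} applies unchanged to the BDF setting; but since $\bm{B}(\cdot,\bm{V})$ is linear in its second argument and the lemma's identity $(\bm{B}(\bm{W},\bm{V}),\bm{V})_h=0$ holds for arbitrary $\bm{W}\in[V_{ew};V_{ns}]$, the argument transfers without modification whether $\bm{W}=\bm{U}^n$ (BDF1) or $\bm{W}=\overline{\bm{U}}^{n+1}$ (BDF2). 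Consequently there is essentially no obstruction: the only substantive input beyond the CN proof is the change of the leading coefficient, which remains positive for all admissible $\tau$.
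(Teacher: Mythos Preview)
Your proposal is correct and follows essentially the same approach as the paper: reduce to the homogeneous system, take the discrete inner product with $\bm{U}$, use Lemma~\ref{convectproperty3} to kill the convective and pressure terms, and conclude $\bm{U}=\bm{0}$ and then $P=\bm{0}$ from the semi-negative definiteness of $\Delta_h$ and the zero-mean condition. The paper only spells out the BDF2 case (with coefficient $3/(2\tau)$) and declares BDF1 analogous, whereas you treat both explicitly; otherwise the arguments are identical.
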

	
	\begin{proof}
		We also only prove the existence and uniqueness of the solution for Scheme \ref{sch:FBDF2} and omit the similar proof of the version of Scheme  \ref{sch:FBDF1}. For brevity, we denote $\bm{U}:=\bm{U}^{n+1},~ \overline{\bm{U}} := \overline{\bm{U}}^{n+1},$ etc. Then the linear system \eqref{fullyfirstorderscheme3} can be written as
		\begin{subequations}\label{UniquesystemBDF}
			\begin{numcases}{} 
				\frac{3\bm{U}-4\bm{U}^n+\bm{U}^{n-1}}{2\tau}-\nu \Delta_h \bm{U}+\bm{B}(\overline{\bm{U}},\bm{U})+
				\nabla_D P=\mathscr{F} , \\[0.15cm]
				\nabla_d \cdot \bm{U}=\bm{0},
			\end{numcases}
		\end{subequations}
		whose homogeneous linear system reads as 
		\begin{subequations}\label{Uniquesystem2}
			\begin{numcases}{} 
				\frac{3}{2\tau}\bm{U}-\nu \Delta_h \bm{U}+\bm{B}(\overline{\bm{U}},\bm{U})+
				\nabla_D P=\bm{0} , \label{Uniquesystem1BDFa} \\[0.15cm]
				\nabla_d \cdot \bm{U}=\bm{0}.
			\end{numcases}
		\end{subequations}
		Similar to the proof of Theorem \ref{thm:CNunique}, we need only to prove that the homogeneous linear system \eqref{Uniquesystem2} with $(P,1)_h = 0$ admits only the zero solution. Taking the discrete inner product of \eqref{Uniquesystem1BDFa} with $\bm{U}$ and using Lemma \ref{convectproperty3}, we have
		\begin{align}
			\frac{3}{2\tau}\|\bm{U}\|^2_h - \nu(\Delta_h \bm{U},\bm{U})_h=0.
		\end{align}
		Since $\Delta_h$ is a semi-negative definite operator and $\nu, ~\tau > 0$, it follows that
		$$\bm{U}=\bm{0}.$$
		Furthermore, we can derive from \eqref{Uniquesystem1BDFa}  that
		$$\nabla_D P = \bm{0}.$$
		Noticing the solvability condition $(P,1)_h = 0$, we obtain $$P = \bm{0}.$$
		This completes the proof.
	\end{proof}
	
	In what follows, we present an explicit and effcient implementation of the fully discrete BDF2 scheme. The fully discrete BDF1 scheme can be solved similarly, and the details are omitted for brevity. We denote
	\begin{align*}
		\alpha=(\bm{F}(\overline{\bm{U}}^{n+1}),\bm{U}^{n+1})_h,\quad \beta=(\bm{G}(\overline{\bm{U}}^{n+1}),\bm{U}^{n+1})_h,
	\end{align*}
	and then have
	\begin{align*}
		\bm{B}(\overline{\bm{U}}^{n+1},\bm{U}^{n+1})=\alpha \bm{G}(\overline{\bm{U}}^{n+1})-\beta \bm{F}(\overline{\bm{U}}^{n+1}).
	\end{align*}
	Therefore, the linear system \eqref{fullyfirstorderscheme3} can be reformulated as
	\begin{subequations}
		\begin{numcases}{} 
			\frac{3\bm{U}^{n+1}-4\bm{U}^n+\bm{U}^{n-1}}{2\tau}-\nu \Delta_h \bm{U}^{n+1}+\alpha \bm{G}(\overline{\bm{U}}^{n+1})-\beta \bm{F}(\overline{\bm{U}}^{n+1})+\nabla_D P^{n+1}=\mathscr{F}^{n+1}, \\[0.15cm]
			\nabla_d \cdot \bm{U}^{n+1}=\bm{0}.
		\end{numcases}
	\end{subequations}
	Similar to the semi-discrete system, we decompose the solutions as follows
	\begin{subequations}
		\label{fullydecomposeBDF}
		\begin{numcases}{} 
			\bm{U}^{n+1}=\alpha \bm{U}^{n+1}_1+\beta \bm{U}^{n+1}_2+\bm{U}^{n+1}_3,\label{fullydecomposeBDFa}\\[0.15cm]
			P^{n+1}=\alpha P^{n+1}_1+\beta P^{n+1}_2+P^{n+1}_3,\label{fullydecomposeBDFb}
		\end{numcases}
	\end{subequations}
	where $\bm{U}^{n+1}_i$ and $P^{n+1}_i\ (i=1,2,3)$ satisfy the following three subsystems
	\begin{subequations}
		\label{implesystemfullyBDF}
		\begin{numcases}{} 
			\frac{3}{2\tau}\bm{U}^{n+1}_i-\nu \Delta_h \bm{U}^{n+1}_i+\nabla_D P^{n+1}_i=\bm{M}^{n}_i, \label{implesystemfullyBDFa}\\[0.15cm]
			\nabla_d \cdot \bm{U}^{n+1}_i=\bm{0},\label{implesystemfullyBDFb}
		\end{numcases}
	\end{subequations}
	with  
	\begin{align}
		\bm{M}^{n}_1=-\bm{G}(\overline{\bm{U}}^{n+1}), \quad \bm{M}^{n}_2=\bm{F}(\overline{\bm{U}}^{n+1}), \quad \bm{M}^{n}_3=\frac{1}{2\tau} \left(4\bm{U}^n-\bm{U}^{n-1}\right)+\mathscr{F}^{n+1}.
	\end{align}
	Applying the discrete divergence operator $\nabla_d \cdot (\bullet)$ to \eqref{implesystemfullyBDFa} and utilizing \eqref{implesystemfullyBDFb}, we obtain
	\begin{align}
		\label{implesystemfullyBDFP}
		\Delta_h P^{n+1}_i=\nabla_d \cdot \bm{M}^{n}_i.
	\end{align}
	It is noted that the system \eqref{implesystemfullyBDFP} under the condition $(P^{n+1}_i,1)_h=0$ is unique solvable and can be efficiently solved using the FFT algorithm. Once $P_i^{n+1}$ is determined, we can compute $\bm{U}^{n+1}_i$ via
	\begin{align}
		\label{explicitimpleBDF}
		\Big(\frac{3}{2\tau}-\nu \Delta_h \Big)\bm{U}^{n+1}_i =\bm{M}^{n}_i-\nabla_DP^{n+1}_i,
	\end{align}
	which is also uniquely solvable and can be efficiently solved using the FFT algorithm. Once $\bm{U}^{n+1}_i~(i=1,2,3)$ are obtained, the coefficients $\alpha$ and $\beta$ can be computed by solving the following $2 \times 2$ linear system
	\begin{align}\label{MatrixequationFULLYBDF}
		A x =	b,
	\end{align}
	where 
	\begin{align*}
		A = \begin{pmatrix}
			1-(\bm{F}(\overline{\bm{U}}^{n+1}),\bm{U}^{n+1}_1)_h & -(\bm{F}(\overline{\bm{U}}^{n+1}),\bm{U}^{n+1}_2)_h\\
			-(\bm{G}(\overline{\bm{U}}^{n+1}),\bm{U}^{n+1}_1)_h &  1-(\bm{G}(\overline{\bm{U}}^{n+1}),\bm{U}^{n+1}_2)_h
		\end{pmatrix},\quad
		x = \begin{pmatrix}
			\alpha\\
			\beta
		\end{pmatrix},\quad b = \begin{pmatrix}
			(\bm{F}(\overline{\bm{U}}^{n+1}),\bm{U}^{n+1}_3)_h\\
			(\bm{G}(\overline{\bm{U}}^{n+1}),\bm{U}^{n+1}_3)_h
		\end{pmatrix}.
	\end{align*}
	After solving $\alpha$ and $\beta$, $\bm{U}^{n+1}$ and $P^{n+1}$ can be updated by utilizing \eqref{fullydecomposeBDFa} and \eqref{fullydecomposeBDFb}, respectively.
	Algorithm \ref{power4} is proposed to summarize the implementation of our fully discrete BDF2 scheme.
	\begin{algorithm}[H]
		\renewcommand{\algorithmicrequire}{\textbf{Date:}}
		\renewcommand{\algorithmicensure}{\textbf{Result:}}
		\caption{Pseudocode outlining the implementation of Scheme \ref{sch:FBDF2} for each time step}
		\label{power4}
		\begin{algorithmic}[1] 
			\REQUIRE  Input $\bm{U}^{n-1},~\bm{U}^n$; 
			\STATE Calculate $P_i^{n+1}$ by solving  \eqref{implesystemfullyBDFP};
			\STATE Calculate $\bm{U}_i^{n+1}$ by solving \eqref{explicitimpleBDF};
			\STATE Calculate $\alpha$ and $\beta$ by solving  \eqref{MatrixequationFULLYBDF};
			\STATE Update $\bm{U}^{n+1}$ and $P^{n+1}$ using \eqref{fullydecomposeBDFa} and \eqref{fullydecomposeBDFb}, respectively;
			\ENSURE Output $\bm{U}^{n+1}$. 
		\end{algorithmic}
	\end{algorithm}
	
	\begin{rem}
		It is noteworthy that all the proposed fully discrete schemes can be straightforwardly extended to accommodate Dirichlet boundary conditions. Specifically, under homogeneous Dirichlet boundary conditions, these schemes still preserve the corresponding energy dissipation law. For more general Dirichlet boundary conditions, they remain uniquely solvable. In such cases, the generalized Stokes systems can be efficiently solved by employing the preconditioned iterative solvers described in Ref. \cite{doan4938899dynamically}.
	\end{rem}

	\section{Numerical experiments}
	\label{Numerical experiments}
	
	In this section, we present a series of benchmark numerical examples to measure the accuracy and efficiency of our proposed second-order schemes, in comparison with existing methods. The test problems include the accuracy verification, the classical Taylor-Green vortex problem, the lid-driven cavity flow and the Kelvin-Helmholtz instability. As both CN2 and BDF2 schemes demonstrate the same numerical results in the numerical examples except the accuracy verification, we report only the results of the CN2-based method. For simplicity, a uniform spatial grid is employed with $h_x = h_y = h$ for two-dimensional problems and $h_x = h_y = h_z = h$ for three-dimensional ones. The Reynolds number is defined as $Re = 1/\nu$. 
	
	\subsection{Accuracy test}\label{Accuracy test}
	We set the spatial domain $\Omega = [0, 1]^2$ and the final time $T=1$. The external force $\bm{f}$ is computed according to the following analytic solution \cite{SHEN2021SIAM} , which satisfies periodic boundary conditions:
	\begin{align}
		\left\{  
		\begin{array}{lr}  
			u(x,y,t)= \exp(t)\sin^2 (\pi x) \sin (2\pi y) , \\[0.15cm]
			v(x,y,t)=-\exp(t)\sin (2\pi x) \sin^2(\pi  y ), \\[0.15cm]
			p(x,y,t)=\exp(t)\sin(2 \pi x)\sin(2 \pi y).
		\end{array}  
		\right. 
	\end{align}
	The function $\bm{F}(\bm{u})$ is chosen as $\bm{F}(\bm{u})=\bm{u}$. We first consider a relative low Reynolds number $Re=10$. To study the convergence of the proposed schemes, we vary both the spatial mesh size and time step size with $h = \frac{\tau}{10}$ and $\tau \in \left\{\frac{1}{20},~\frac{1}{40},~\frac{1}{80},~\frac{1}{160}\right\}$. For CN2-based schemes, we report the $L^{\infty}$ errors of pressure at time $t=(N_t-0.5)\tau$. For other schemes or variables, the errors are recorded at finial time. Figure~\ref{ERROR} compares the $L^{\infty}$ errors of four methods: our proposed CN2 and BDF2 schemes, the CN2-based dynamically regularized Lagrange multiplier (DRLM) method from \cite{doan4938899dynamically} (using their recommended parameter $\theta=100$), and the CN2-based zero-energy contribution (ZEC) scheme \cite{YANG2020WILEY}. It is clear that all the schemes satisfy the second-order convergence rate, while our CN2 scheme enjoys the smallest $L^{\infty}$ errors for both velocity $\bu$ and pressure $p$.
	
	Next, we increase the Reynolds number to $Re=1000$. Due to the explicit treatment of the convection term which requires some CFL condition, we use $h=4\tau$ and $\tau\in\left\{\frac{1}{400},~\frac{1}{800},~\frac{1}{1600},~\frac{1}{3200}\right\}$. The corresponding errors of the velocity and pressure are recorded in Table \ref{timeaccuracyU} and Table \ref{timeaccuracyP}, respectively. To be specific, all the schemes successfully reach the second-order convergence rate. Additionally, because of the predominance of the spatial errors, the accuracy of various schemes differs little from each other.

		\begin{figure}[ht]
			\centering
			\subfigure[$L^{\infty}$ errors of velocity.]{
				\includegraphics[width=3in]{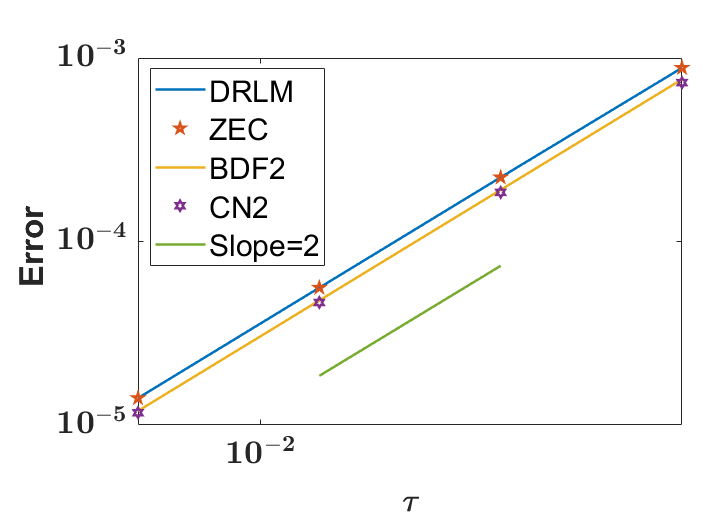}
			}
			\subfigure[$L^{\infty}$ errors of pressure.]{
				\includegraphics[width=3in]{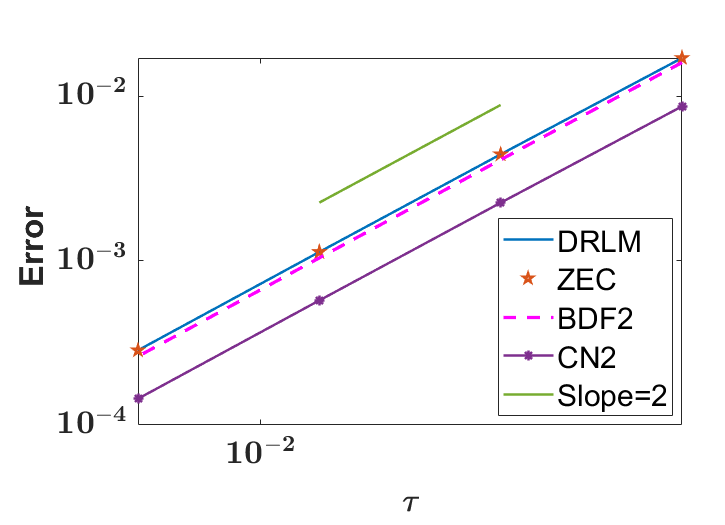}
			}
			\caption{Errors of the velocity and pressure obtained by various schemes with $Re=10$.}\label{ERROR}
		\end{figure} 
		\begin{table}[H]
			\centering
			\caption{\large Convergence rate for velocity field $\bm{u}$ with $Re=1000$.}
			\label{timeaccuracyU}
			\scalebox{0.9}{
				\begin{tabular}{c|ccccc}
					\hline  
					& &$\tau=1/400$&$\tau/2$&$\tau/4$&$\tau/8$\\
					\hline
					CN2   &$L^{\infty}$-errors  & 2.0340e-03&5.0660e-04& 1.2630e-04 &3.1540e-05    \\
					&Rate   &  -   &2.00& 2.00&2.00\\
					BDF2 & $L^{\infty}$-errors  &  2.0350e-03 &5.0670e-04&1.2640e-04 &3.1550e-05   \\
					&Rate   &  -   &2.00& 2.00&2.00\\
					DRLM \cite{doan4938899dynamically} & $L^{\infty}$-errors  &  2.0320e-03 &5.0600e-04&1.2620e-04 &3.1510e-05  \\
					&Rate   &  -   &2.00& 2.00&2.00\\
					ZEC \cite{YANG2020WILEY}& $L^{\infty}$-errors  &  2.0320e-03 &5.0600e-04&1.2620e-04 &3.1510e-05  \\
					&Rate   &  -   &2.00& 2.00&2.00\\
					\hline  
				\end{tabular}
			}
		\end{table}
		
		\begin{table}[H]
			\centering
			\caption{\large Convergence rate for pressure $p$ with $Re=1000$.}
			\label{timeaccuracyP}
			\scalebox{0.9}{
				\begin{tabular}{c|ccccc}
					\hline  
					& &$\tau=1/400$&$\tau/2$&$\tau/4$&$\tau/8$\\
					\hline
					CN2   &$L^{\infty}$-errors  & 7.1890e-03&  1.8000e-03  &4.5030e-04&1.1260e-04 \\
					&Rate   &  -   &1.99&1.99 &2.00\\
					BDF2 & $L^{\infty}$-errors  & 7.1960e-03& 1.8000e-03 &4.4990e-04  &1.1250e-04\\
					&Rate   &  -   & 2.00&2.00 &2.00\\
					DRLM \cite{doan4938899dynamically}& $L^{\infty}$-errors  &  7.2110e-03  &1.8060e-03&4.5170e-04 &1.1300e-04   \\
					&Rate   &  -   &1.99& 1.99&2.00\\
					ZEC \cite{YANG2020WILEY} & $L^{\infty}$-errors  &  7.2110e-03  &1.8060e-03&4.5170e-04 &1.1300e-04   \\
					&Rate   &  -   &1.99& 1.99&2.00\\
					\hline  
				\end{tabular}
			}
		\end{table}
		\subsection{Energy dissipation test}
		In this example, our primary objective is to measure the the energy dissipation property of our developed schemes.  We consider the Taylor-Green vortex problem in the rectangular domain $\Omega = [0, 1]^2$ under periodic boundary conditions. The exact solution to this problem is given as follows:
		\begin{align}
			\left\{  
			\begin{array}{lr}  
				u(x,y,t)=\exp(-8 \pi^2 \nu t) \sin (2\pi x) \cos (2\pi y),  \\[0.15cm]
				v(x,y,t)= -\exp(-8 \pi^2 \nu t)\cos (2\pi x) \sin (2\pi y), \\[0.15cm]
				p(x,y,t)= \frac{1}{4}\exp(-16 \pi^2 \nu t)[\cos(4\pi x)+\cos(4\pi y)].
			\end{array}  
			\right.  
		\end{align}
		The exact expression of kinetic energy is given by
		\begin{align}
			E(t)=\frac{1}{4}\exp(-16 \pi^2 \nu t).
		\end{align}
		\begin{figure}[H]
			\centering
			\subfigure[$Re=1000$, $\tau=\frac{1}{64}$.]{
				\includegraphics[width=2.5in]{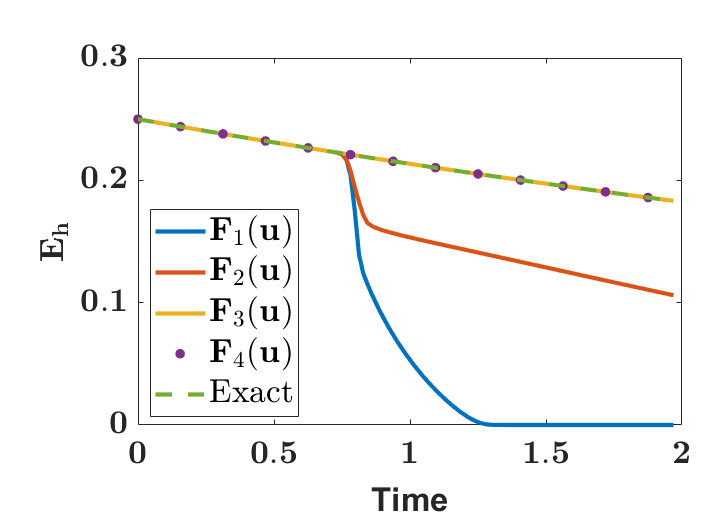}
			}
			\subfigure[$Re=1000$, $\tau=\frac{1}{128}$.]{
				\includegraphics[width=2.5in]{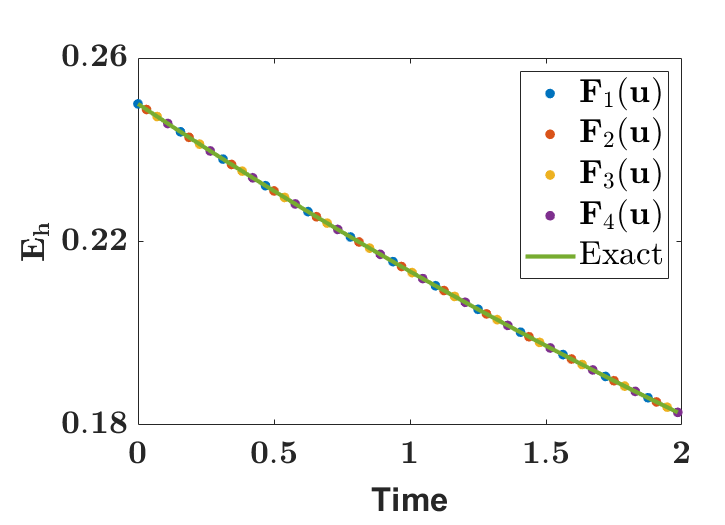}
			}\\
			\subfigure[$Re=10000$, $\tau=\frac{1}{256}$.]{
				\includegraphics[width=2.5in]{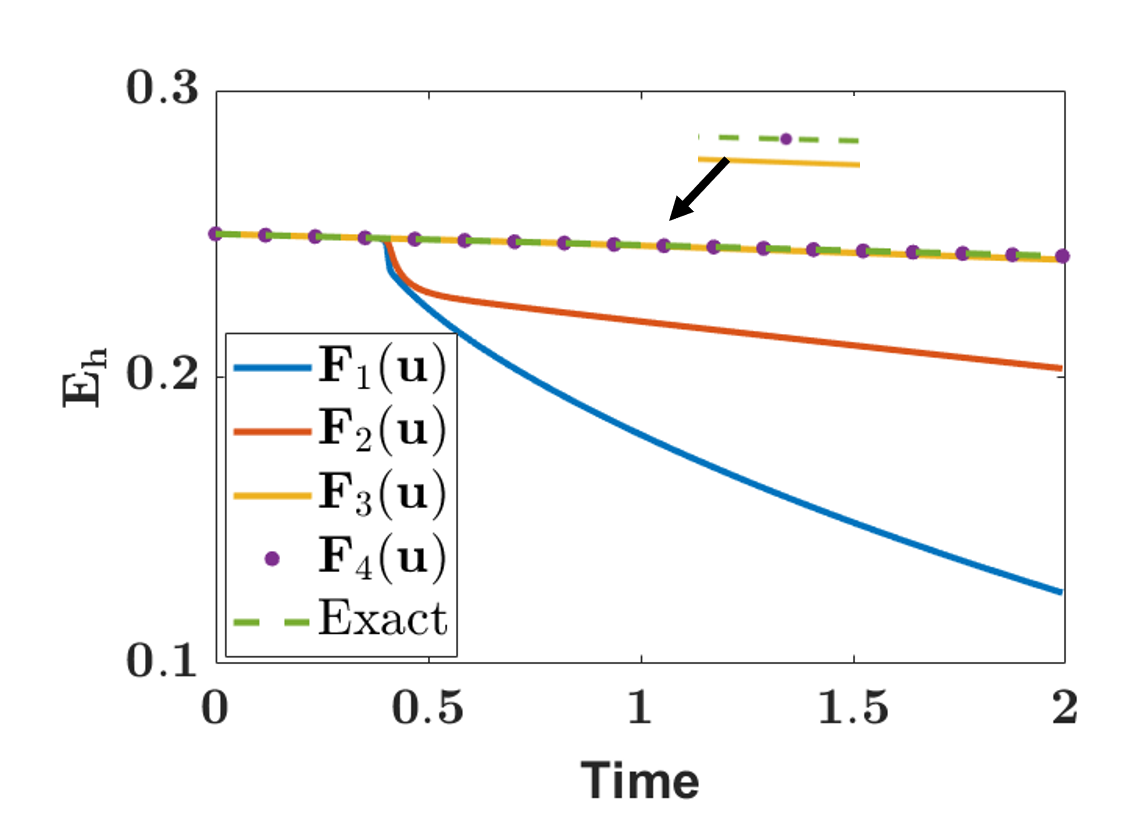}
			}
			\subfigure[$Re=10000$, $\tau=\frac{1}{512}$.]{
				\includegraphics[width=2.5in]{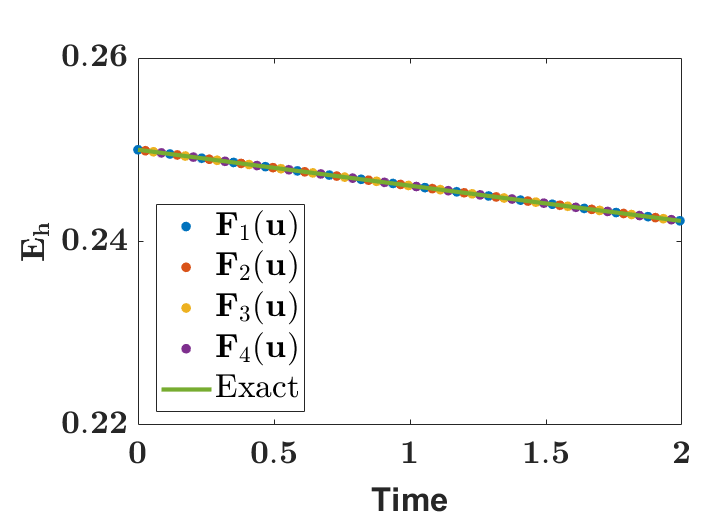}
			}
			\caption{Energy evolution for CN2 scheme with different forms of $\bm{F}(\bm{u})$. Mesh size is $h=\frac{1}{128}$ for $Re=1000$, and $h=\frac{1}{256}$ for $Re=10000$ with different time steps.}\label{CPFU}
		\end{figure} 
		
		To measure the numerical influence of various formulations of $\bm{F} (\bm{u})$ on the energy dissipation performance, we consider $\bm{F} (\bm{u}) \in \left\{ \bm{F}_1(\bm{u})= \bm{u},~ \bm{F}_2(\bm{u})=\bm{u}^3,~ \bm{F}_3(\bm{u}),~ \bm{F}_4(\bm{u})\right\}$, where 
		\begin{align*}
			\bm{F}_3(\bm{u})=	\left\{  
			\begin{array}{lr}  
				\bu,  \quad |\bu|<10^{-10},\\[0.15cm]
				\frac{1}{\bu}, \quad \text{otherwise},
			\end{array}  
			\right.  \quad \bm{F}_4(\bm{u})=	\left\{  
			\begin{array}{lr}  
				\bu,  \quad |\bu|<10^{-10},\\[0.15cm]
				\frac{1}{\bu^3}, \quad  \text{otherwise}.
			\end{array}  
			\right.
		\end{align*}
		For Reynolds number $Re=1000$ and $Re=10000$, we consider the uniform mesh size $h=\frac{1}{128}$ and $h=\frac{1}{256}$, respectively. 
		The energy evolution obtained by using various forms of $\bm{F}(\bm{u})$ is generated in Figure~\ref{CPFU}. We observe that, under different time step sizes, all computed curves exhibit a monotonic decay for both two Reynolds numbers, confirming that our algorithm preserves the energy dissipation property unconditionally. However, it can be seen that for relatively coarse time step size, our energy evolution curves obtained by using $\bm{F}_1(\bm{u})$ or $\bm{F}_2(\bm{u})$, deviate significantly from the exact energy. In contrast, the energy curves obtained by both $\bm{F}_3(\bm{u})$ and $\bm{F}_4(\bm{u})$ closely match the exact solution for both time step sizes. Furthermore, the energy curve given by using $\bm{F}_4(\bm{u})$ is more accurate than the one obtained with $\bm{F}_3(\bm{u})$ under $Re=10000$. To further investigate the observed phenomenon, we plot the temporal evolution of the quantity $|(\overline{\bm{u}}^{n+\frac{1}{2}} \cdot \nabla\overline{\bm{u}}^{n+\frac{1}{2}},\bm{u}^{n+\frac{1}{2}})_h|$ in Figure~\ref{innerCP} for various $\bm{F}(\bm{u})$, which is equal to zero in the continuous level. It is observed that when the time step is sufficiently small, quantities computed by different $\bm{F}(\bm{u})$ are relatively small. However, as the time step increases, the quantities produced by CN2 scheme with $\bm{F}_1(\bm{u})$ or $\bm{F}_2(\bm{u})$ exhibit remarkable deviations from zero, compared to those obtained with $\bm{F}_4(\bm{u})$. This suggests that the proper choice of the stabilization term $\bm{F}(\bm{u})$ can effectively suppress the pathological growth of the quantity $|(\overline{\bm{u}}^{n+\frac{1}{2}} \cdot \nabla\overline{\bm{u}}^{n+\frac{1}{2}},\bm{u}^{n+\frac{1}{2}})_h|$, thereby mitigating the unphysical oscillations in the energy evolution. All the discoveries inspire us to use $\bm{F}_4(\bm{u})$ while carrying out numerical simulations under large time steps in this example.

		\begin{figure}[H]
			\centering
			\subfigure[$Re=1000$, $\tau=\frac{1}{64},~h=\frac{1}{128}$.]{
				\includegraphics[width=2.6in]{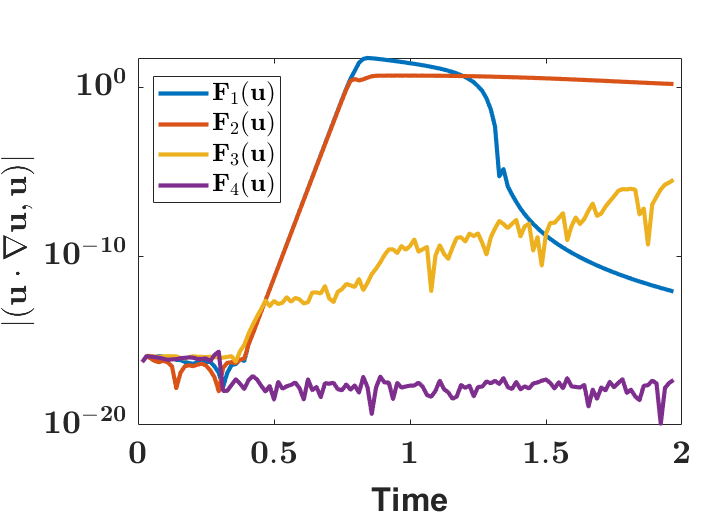}
			}
			\subfigure[$Re=1000$, $\tau=\frac{1}{128},~h=\frac{1}{128}$.]{
				\includegraphics[width=2.6in]{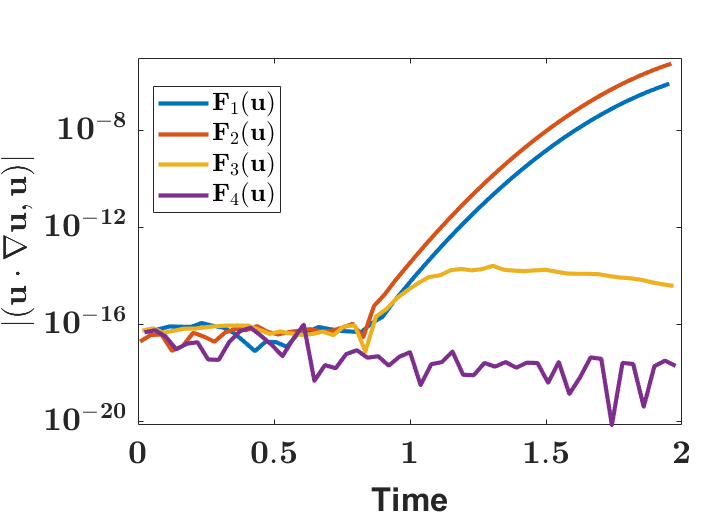}
			}\\
			\subfigure[$Re=10000$, $\tau=\frac{1}{256},~h=\frac{1}{256}$.]{
				\includegraphics[width=2.6in]{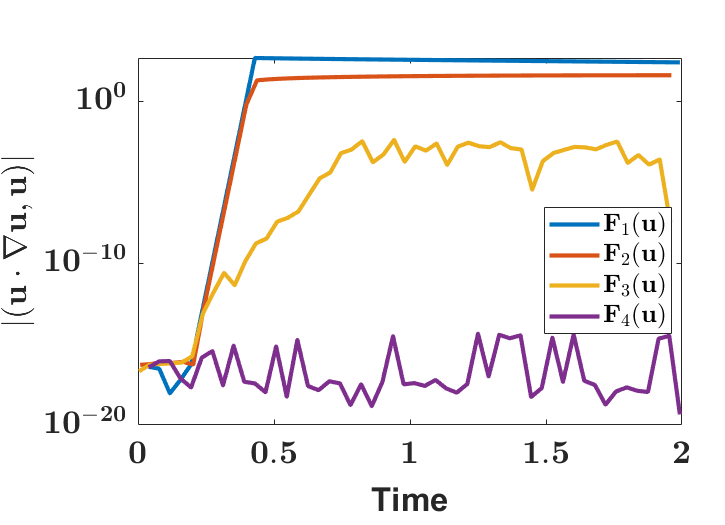}
			}
			\subfigure[$Re=10000$, $\tau=\frac{1}{512},~h=\frac{1}{256}$.]{
				\includegraphics[width=2.6in]{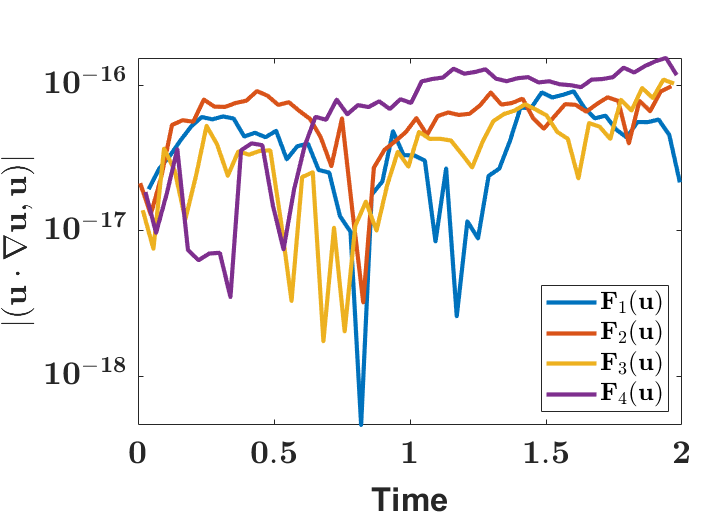}
			}
			\caption{Comparisons of the quantity $|(\overline{\bm{u}}^{n+\frac{1}{2}} \cdot \nabla\overline{\bm{u}}^{n+\frac{1}{2}},\bm{u}^{n+\frac{1}{2}})_h|$ between the CN2 scheme with various forms of $\bm{F}(\bm{u})$.}\label{innerCP}
		\end{figure} 
		
		Furthermore, a comprehensive numerical comparison is performed among our CN2 scheme, the DRLM scheme, and the ZEC scheme.  In Figure~\ref{CPOSC}, it can be seen that both the DRLM scheme and the ZEC scheme noticeably violate the original energy dissipation law under a relatively coarse time step. In contrast, our proposed scheme, equipped with the robust term $\bm{F}_4(\bm{u})$, accurately predicts the expected energy evolution across various large Reynolds numbers. These findings demonstrate that, under relatively coarse time step sizes, our developed scheme can yield substantially improved accuracy compared to the DRLM and ZEC approaches. The numerical results in Figure~\ref{largestepTGV} further validate that our scheme maintains stability even in long-time simulations with coarse time steps.
		\begin{figure}[H]
			\subfigure[$Re=1000$, the DRLM scheme.]{
				\includegraphics[width=2in]{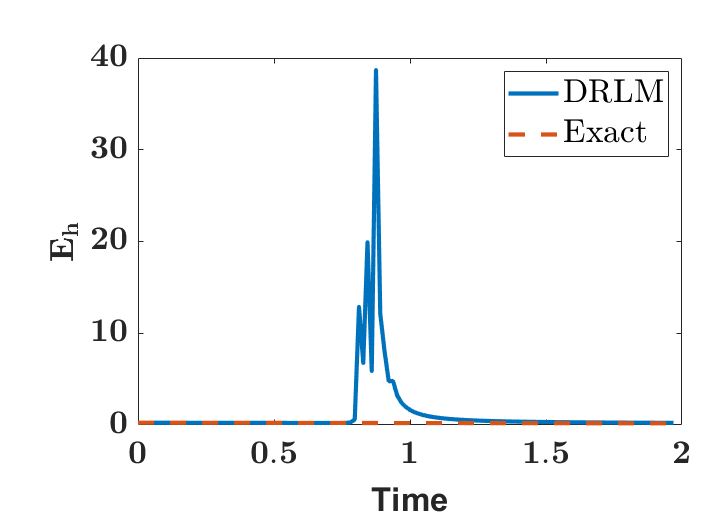}
			}
			\subfigure[$Re=1000$, the ZEC scheme.]{
				\includegraphics[width=2in]{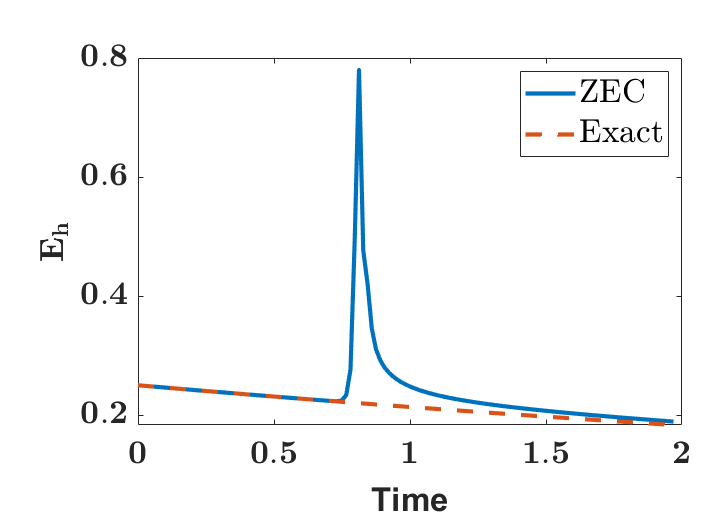}
			}
			\subfigure[$Re=1000$, CN2 scheme.]{
				\includegraphics[width=2in]{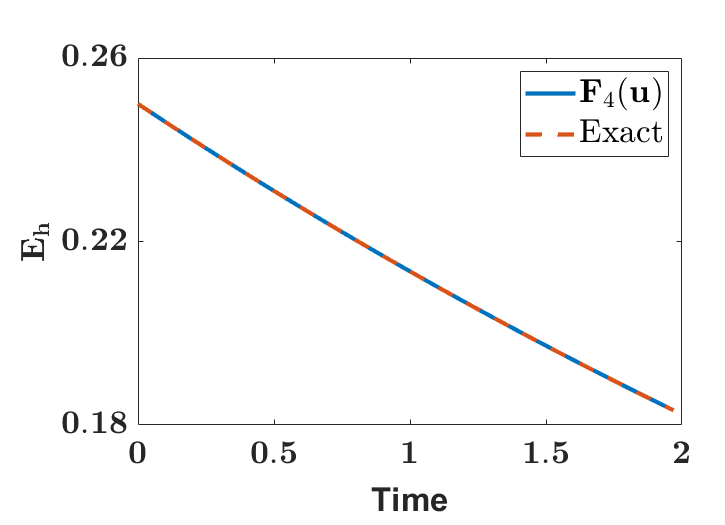}
			}\\
			\subfigure[$Re=10000$, the DRLM scheme.]{
				\includegraphics[width=2in]{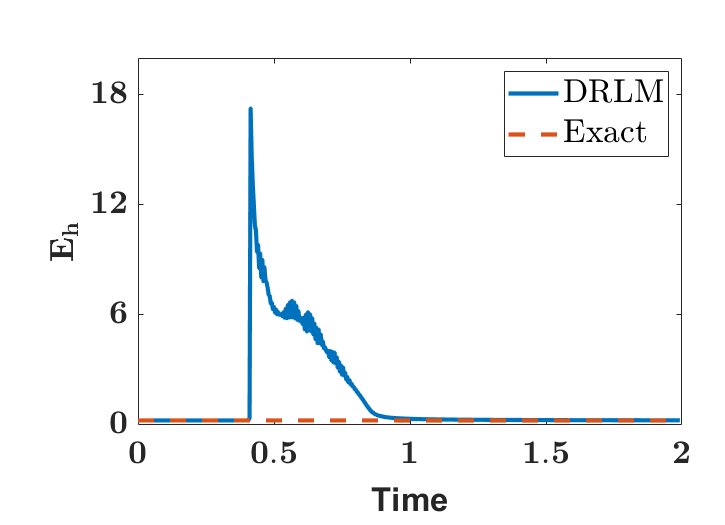}
			}
			\subfigure[$Re=10000$, the ZEC scheme.]{
				\includegraphics[width=2in]{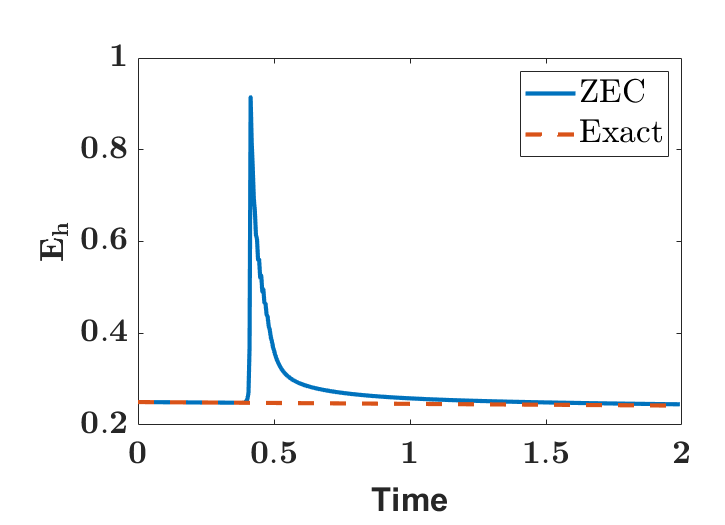}
			}
			\subfigure[$Re=10000$, CN2 scheme.]{
				\includegraphics[width=2in]{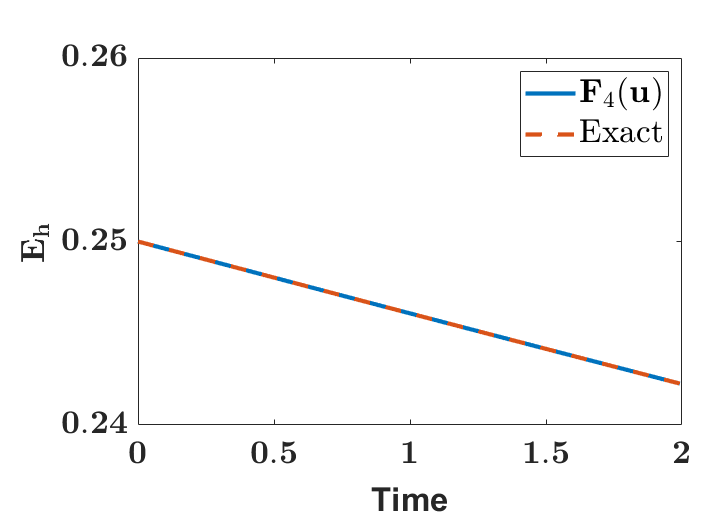}
			}
			\caption{Energy evolution for CN2 scheme with recommended $\bm{F}_4(\bm{u})$, DRLM scheme and ZEC scheme. Mesh size is $h=\frac{1}{128}$, $\tau=\frac{1}{64}$ for $Re=1000$, and $h=\frac{1}{256}$, $\tau=\frac{1}{256}$ for $Re=10000$.}\label{CPOSC}
		\end{figure} 
		
		\begin{figure}[ht]
			\centering
			\subfigure[$Re=1000$, $\tau=\frac{1}{64}$.]{
				\includegraphics[width=3in]{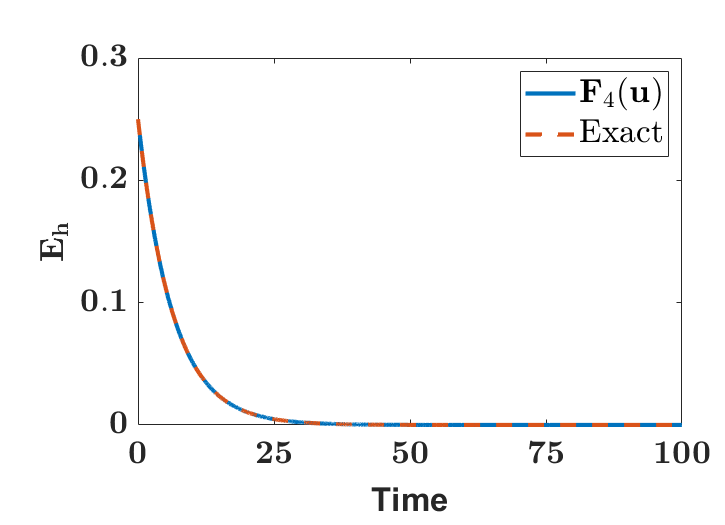}
			}
			\subfigure[$Re=10000$, $\tau=\frac{1}{256}$.]{
				\includegraphics[width=3in]{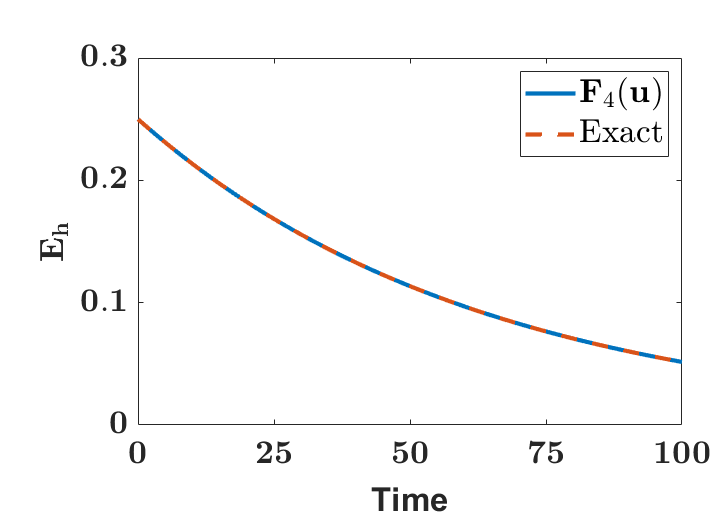}
			}
			\caption{Energy evolution for CN2 scheme with $\bm{F}_4(\bm{u})$. Mesh size is $h=\frac{1}{128}$, $\tau=\frac{1}{64}$ for $Re=1000$, and $h=\frac{1}{256}$, $\tau=\frac{1}{256}$ for $Re=10000$ in the long-time simulation.}\label{largestepTGV}
		\end{figure} 

			\subsection{Lid driven cavity flow}
			\label{Lid-driven square cavity flow}
			In this example, we further evaluate our algorithm's performance using the classical lid-driven cavity flow benchmark. All computations proceed until reaching a steady state, with convergence defined by the criterion $\| \bm{u}^n-\bm{u}^{n-1}\|_{\infty} \leq 10^{-6}$.
			
			\begin{figure}[H]
				\centering
				\subfigure[Velocity streamline ($Re=5000$).]{
					\includegraphics[width=3in]{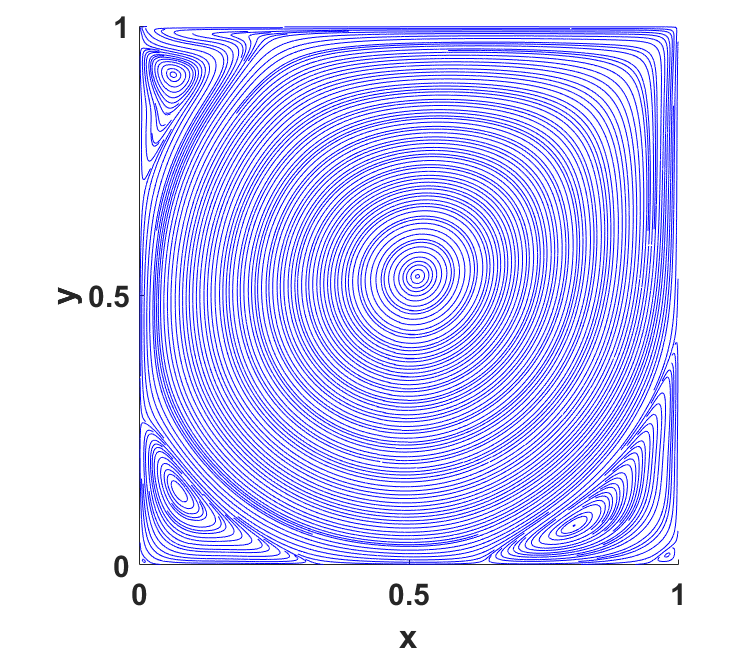}
				}
				\subfigure[Velocity magnitude ($Re=5000$).]{
					\includegraphics[width=3in]{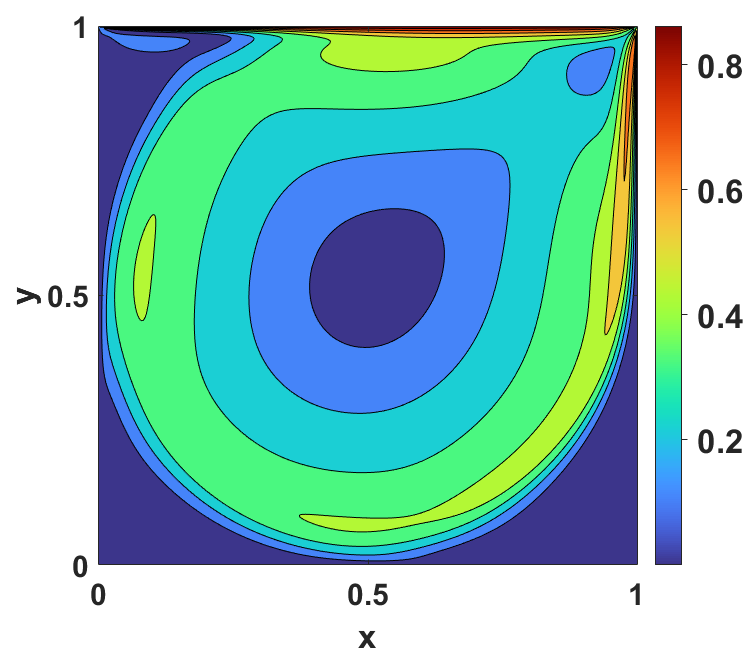}
				}
				\caption{Streamline (left) and contour plots (right) of the velocity magnitude at the steady state.}
				\label{LiddrivenCon2}
			\end{figure} 
			\begin{figure}[H]
				\centering
				\subfigure[Velocity-$u$ on $x=0.5$.]{
					\includegraphics[width=3in]{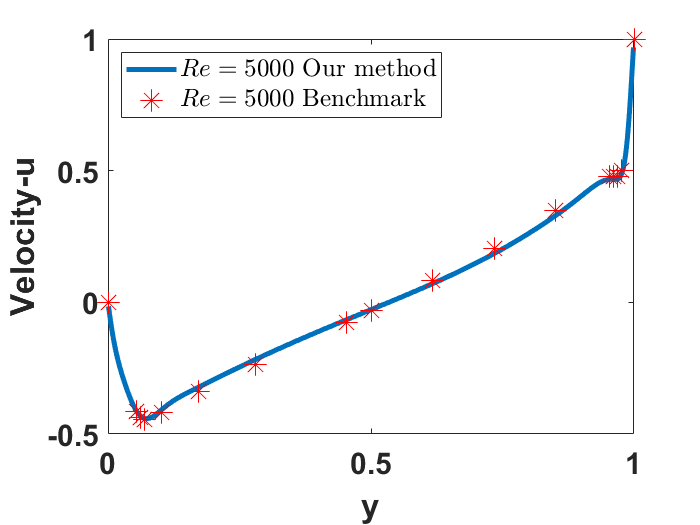}
				}
				\subfigure[Velocity-$v$ on $y=0.5$.]{
					\includegraphics[width=3in]{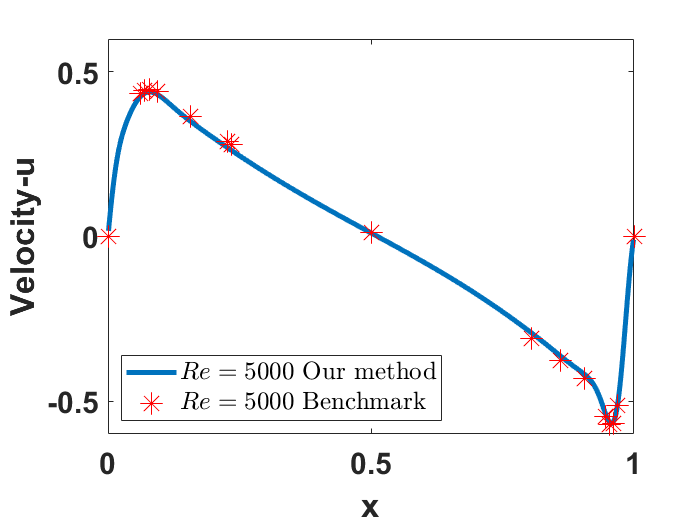}
				}
				\caption{Velocity on $x=0.5$ and $y=0.5$ of the lid-driven cavity flow.}\label{uvcut2}
			\end{figure} 
			For the 2D problem \cite{TamerAAEJ2017}, we consider the computational domain $\Omega = [0, 1]^2$, bounded by three stationary no-slip walls (at $x=0,~ x=1,$ and $y=0$) and a lid (at $y = 1$) moving with a tangential unit velocity. As no analytical solution is available for this problem, our numerical results are compared against established benchmark data reported in \cite{TamerAAEJ2017}. 
			The simulation is carried out with $Re=5000,~ h=\frac{1}{512}$, $\tau=4\times 10^{-4}$ and  $\bm{F}(\bm{u})=\bm{u}$. The streamline and contour plot of the velocity magnitude at the final steady state are shown in Figure~\ref{LiddrivenCon2}. In addition to the primary and secondary vortices in the lower corners, a third vortex is observed in the upper left corner. Furthermore, Figure~\ref{uvcut2} displays the velocity profiles along the vertical and horizonal centerlines of the cavity. These results demonstrate that the proposed scheme effectively captures the flow dynamics and accurately resolves the steady-state structure in line with existing reference solutions \cite{TamerAAEJ2017}.

			\begin{figure}[ht]
				\centering
				\subfigure[$\omega_1 (0.5,y,z)$.]{
					\includegraphics[width=3in]{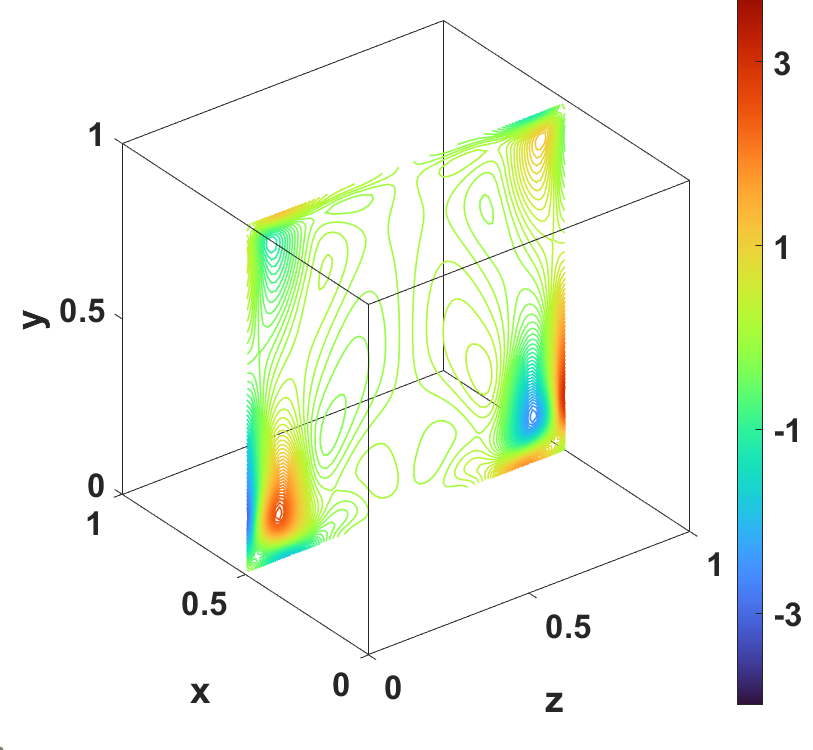}
				}
				\subfigure[$\omega_2 (x,0.5,z)$.]{
					\includegraphics[width=3in]{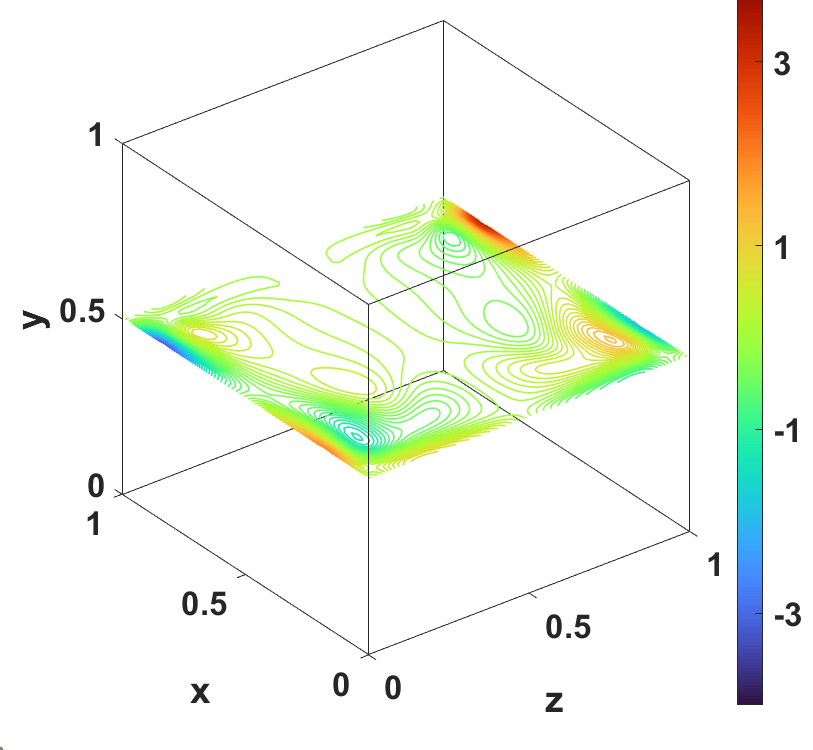}
				}\\
				\subfigure[$\omega_3 (x,y,0.5)$.]{
					\includegraphics[width=3in]{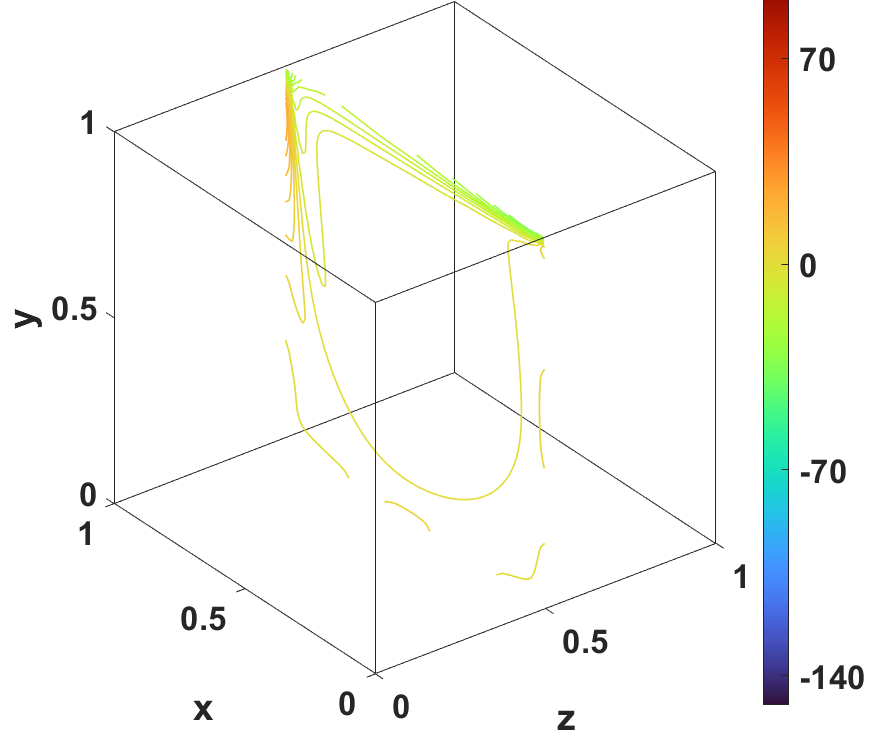}
				}
				\caption{Contour plots of the vorticity components on the midplanes at the steady state.}\label{555}
			\end{figure} 
			\begin{figure}[ht]
				\centering
				\subfigure[Velocity-$u$ on $x=z=0.5$.]{
					\includegraphics[width=3in]{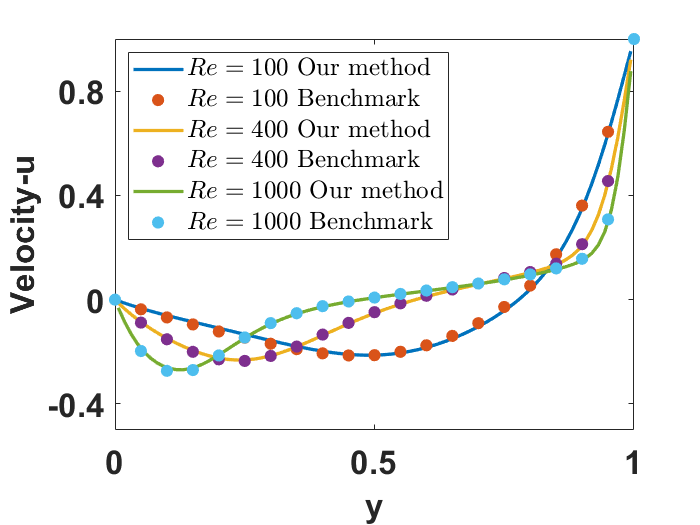}
				}
				\subfigure[Velocity-$v$ on $y=z=0.5$.]{
					\includegraphics[width=3in]{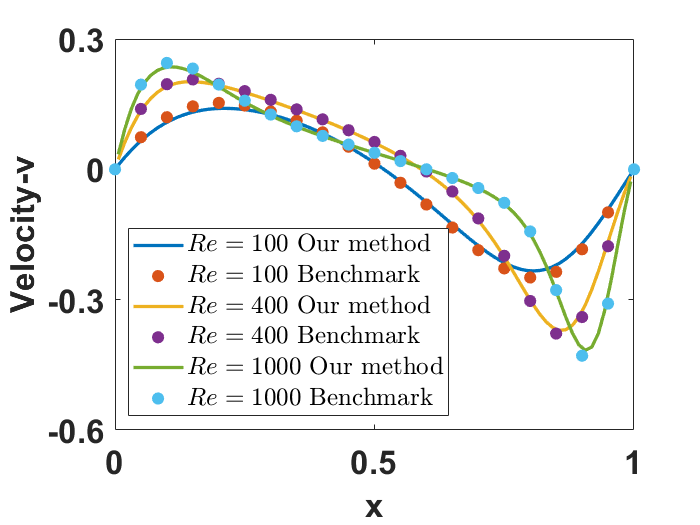}
				}
				\caption{Steady-state velocity profiles along the cavity centerlines.}\label{3Duvcut2}
			\end{figure}

			We next consider the 3D lid-driven flow \cite{ALBENSOEDER2005JCP} in a cubic cavity $\Omega=[0,1]^3$. The fluid is initially at rest, movement is induced by the top lid (at $y=1$) being translated at a constant unit velocity in the positive $x-\mathrm{direction}$. All remaining boundaries are subject to homogeneous Dirichlet boundary conditions. Simulations are performed at $Re=100,~400,~1000$, $h=\frac{1}{80}$, $\tau=5\times 10^{-4}$, and the flow is evolved using our proposed scheme until a steady state is achieved. Figure~\ref{555} displays contour plots of the vorticity components $\boldsymbol{\omega} = (\omega_1,\omega_2,\omega_3)^T = \nabla \times \bm{u}$ on the central planes $x=0.5,~y=0.5,$ and $z=0.5$. The flow exhibits symmetric about the plane $z=0.5$, and in addition to the corner vortices, Taylor-G$\ddot{\mathrm{o}}$rtler-like vortices are observed near the bottom region of the cavity. These features are consistent with previous observations reported in \cite{JIANG1994CMAME}. Furthermore, Figure~\ref{3Duvcut2} illustrates the $u$- and $v$-velocity profiles along the vertical ($x=z=0.5$) and horizontal ($y=z=0.5$) centerlines at steady-state. The results exhibit excellent agreement with the benchmark data presented in \cite{ALBENSOEDER2005JCP}, thereby validating the accuracy and robustness of the proposed scheme in capturing complex three-dimensional flow structures.

			\subsection{Kelvin-Helmholtz instability}
			When there is an initial velocity difference across a shear layer, small disturbance can grow over time, leading to the formation of vortices. This phenomenon is known as the Kelvin-Helmholtz instability. 
			
			In this example, we consider the 2D incompressible NS equations in $\Omega=[0,1]^2$ without the external force. Slip boundary conditions, given by $\frac{\partial u}{\partial y}=0$ and $v=0$, are imposed along the top and bottom boundaries, while periodic boundary conditions are applied at the left and right boundaries. The initial condition reads as \cite{Phipp2019CMA}
			\begin{align*}
				&u(x,y,0)=u_{\infty}\tanh \left( \frac{2y-1}{\delta_0}\right) +c_n\partial_y \psi(x,y),\\
				&v(x,y,0)=-c_n\partial_x \psi(x,y),
			\end{align*}
			with the stream function
			\begin{align*}
				\psi(x,y)=u_{\infty}\mathrm{exp}\left(-\frac{(y-0.5)^2}{\delta^2_0}\right)[\cos(8\pi x)+\cos(20\pi x)],
			\end{align*}
			where $\delta_0=1/28$ is the initial vorticity thickness, and $u_{\infty}=1$ is a reference velocity. $c_n=10^{-3}$ is a scaling factor. Numerical simulations are conducted with $h=\frac{1}{256}$, $\Delta t=\frac{1}{420}$, and $\bm{F}(\bm{u})=\bm{u}$, using a fixed viscosity $\nu=\frac{1}{2800}$. The characteristic time unit is defined as $\bar{t}=\frac{1}{28}$, and computations are carried out up to the final time $T=200\bar{t}$. Figure~\ref{444} displays the time evolution of the vorticity field. It is clearly observed that Four vortices emerge from the initial shear layer, merge into two dominant structures, and persist with diminishing strength until $T=200\bar{t}$, consistent with the results reported in \cite{doan4938899dynamically}. Moreover, Figure~\ref{KHIen} proves the energy dissipation property of the system, in agreement with the reference energy curve obtained using our scheme with a small time step $\tau=\frac{1}{2500}$. Additionally, the evolution of the quantity $|(\overline{\bm{u}}^{n+\frac{1}{2}} \cdot \nabla\overline{\bm{u}}^{n+\frac{1}{2}},\bm{u}^{n+\frac{1}{2}})_h|$ remains relative small number, consistent with the former observation in the energy dissipation test.

			\begin{figure}[H]
				\centering
				\subfigure[$t=10\bar{t}$.]{
					\includegraphics[width=2.5in]{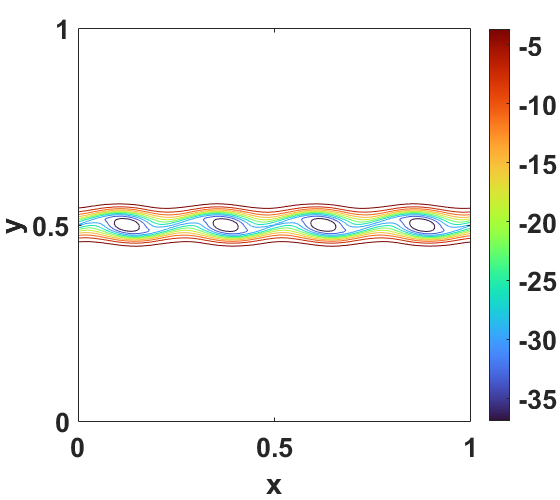}
				}
				\subfigure[$t=17\bar{t}$.]{
					\includegraphics[width=2.5in]{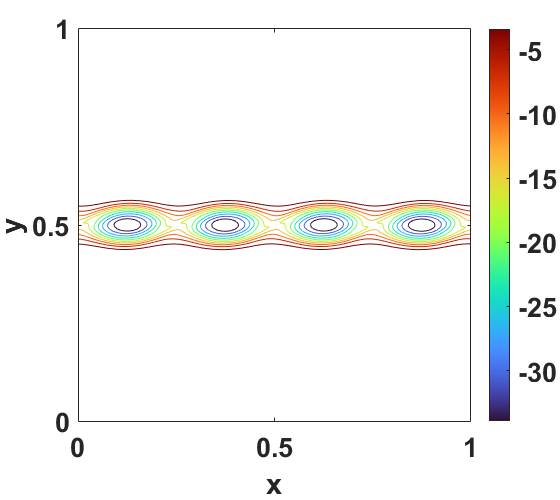}
				}\\
				\subfigure[$t=42\bar{t}$.]{
					\includegraphics[width=2.5in]{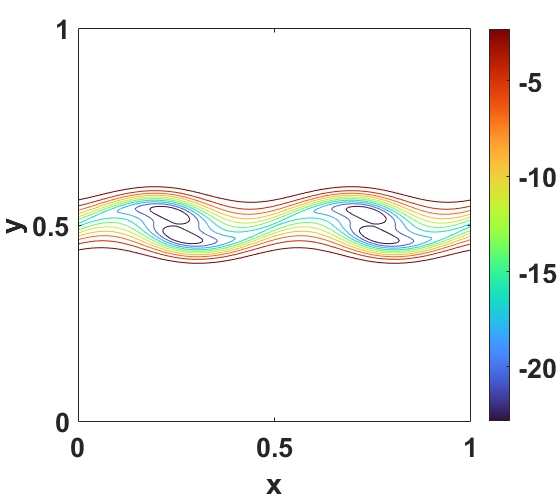}
				}
				\subfigure[$t=50\bar{t}$.]{
					\includegraphics[width=2.5in]{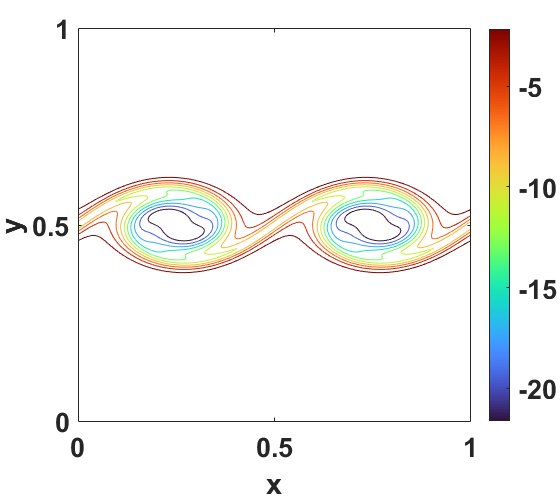}
				}\\
				\subfigure[$t=140\bar{t}$.]{
					\includegraphics[width=2.5in]{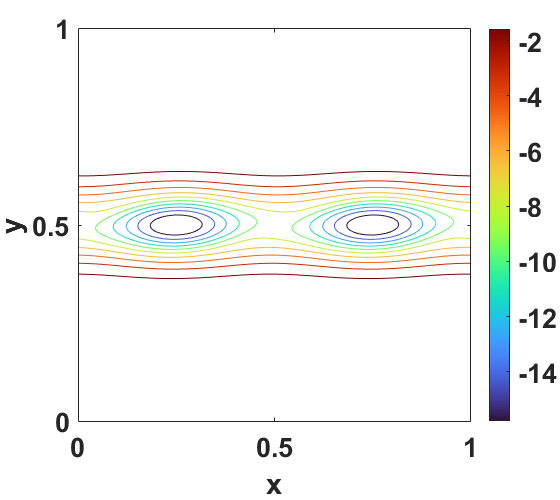}
				}
				\subfigure[$t=200\bar{t}$.]{
					\includegraphics[width=2.5in]{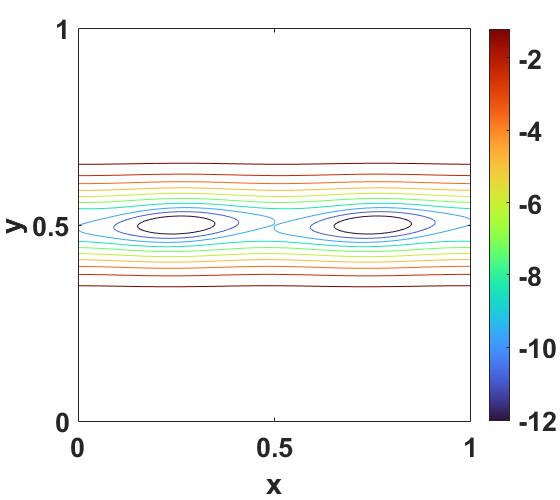}
				}
				\caption{Vorticity field at various times.}\label{444}
			\end{figure} 
			\begin{figure}[H]
				\centering
				\subfigure[Energy evolution.]{
					\includegraphics[width=3in]{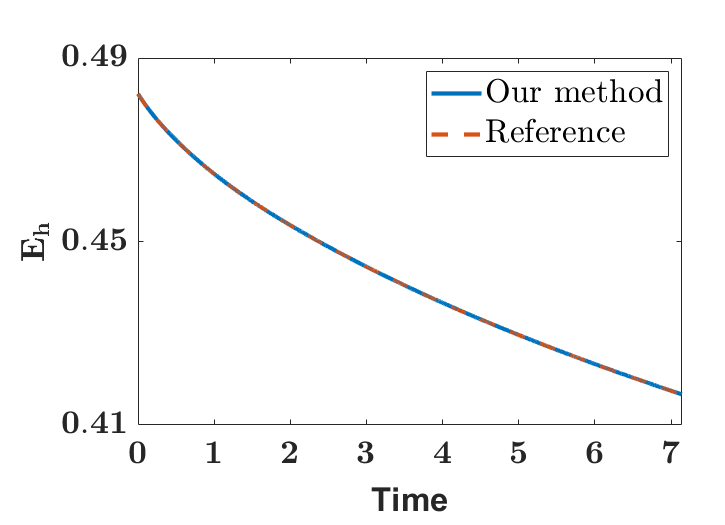}
				}
				\subfigure[The quantity $|(\overline{\bm{u}}^{n+\frac{1}{2}} \cdot \nabla\overline{\bm{u}}^{n+\frac{1}{2}},\bm{u}^{n+\frac{1}{2}})_h|$. ]{
					\includegraphics[width=3in]{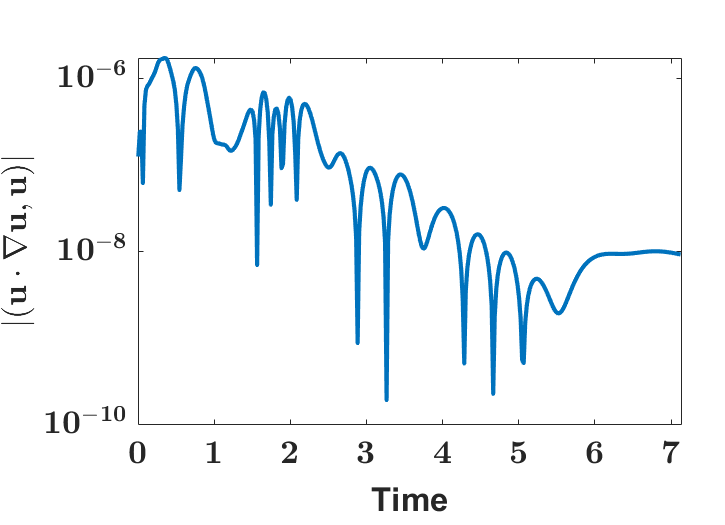}
				}
				\caption{Energy evolution for CN2 scheme with $h=\frac{1}{256}$ and $\frac{1}{420}$ and the quantity $|(\overline{\bm{u}}^{n+\frac{1}{2}} \cdot \nabla\overline{\bm{u}}^{n+\frac{1}{2}},\bm{u}^{n+\frac{1}{2}})_h|$ .}\label{KHIen}
			\end{figure}

			\section{Conclusion}
			\label{Conclusion}
			
			In this paper, we have proposed the robust reformulation for the incompressible NS equations to develop first- and second-order structure-preserving algorithms. In contrast to existing techniques such as SAV or Lagrangian methods, our approach eliminates the need for auxiliary variables yet facilitates an explicit discretization of the convective term, strictly conserving the original energy dissipation law. Through spatial discretization on a staggered grid, we develop fully discrete schemes which are shown to conserve the original energy dissipation law and exhibit existence and uniqueness of solutions. Our numerical schemes are linear and easy to implement, as they only require solving three generalized Stokes systems and a $2\times 2$ system of linear equations at each time step. Various numerical experiments and comparisons have been conducted to verify the accuracy, energy stability, and the efficiency of our proposed schemes. Numerical experiments show that as $|(\overline{\bm{u}}^{n+\frac{1}{2}} \cdot \nabla\overline{\bm{u}}^{n+\frac{1}{2}},\bm{u}^{n+\frac{1}{2}})_h|$ approaches zero, computational accuracy improves. Based on this observation, we propose developing adaptive algorithms using this metric, with implementation details to follow in future work.
			
			In addition, we note that by combining our robust reformulation with projection techniques, more efficient decoupled energy stable schemes can be developed. For example, first- and second-order pressure-correction formulations are presented below
			\begin{equation}
				\begin{cases}
					\frac{\tilde{\bm{u}}^{n+1}-\bm{u}^n}{\tau} -\nu \Delta \tilde{\bm{u}}^{n+1} + 	\bm{B}(\bm{u}^n,\tilde{\bm{u}}^{n+1}) + \nabla p^{n}=\bm{f}^{n+1}, \quad \tilde{\bm{u}}|_{\partial \Omega}=\bm{0}, \\
					\frac{\bm{u}^{n+1} - \tilde{\bm{u}}^{n+1}}{\tau} + \nabla (p^{n+1}-p^{n}) = 0,\\
					\nabla \cdot \bm{u}^{n+1}=0, \quad \bm{u}^{n+1}\cdot \bm{n}|_{\partial \Omega} = 0,
				\end{cases}
			\end{equation}
			and
			\begin{equation}
				\begin{cases}
					\frac{3\tilde{\bm{u}}^{n+1}-4\bm{u}^n+\bm{u}^{n-1}}{2\tau} -\nu \Delta \tilde{\bm{u}}^{n+1} + 	\bm{B}(\overline{\bm{u}}^{n+1},\tilde{\bm{u}}^{n+1}) + \nabla p^{n}=\bm{f}^{n+1}, \quad \tilde{\bm{u}}|_{\partial \Omega}=\bm{0}, \\
					\frac{3\bm{u}^{n+1} - 3\tilde{\bm{u}}^{n+1}}{2\tau} + \nabla (p^{n+1}-p^{n}) = 0,\\
					\nabla \cdot \bm{u}^{n+1}=0, \quad \bm{u}^{n+1}\cdot \bm{n}|_{\partial \Omega} = 0.
				\end{cases}
			\end{equation}
			Furthermore, the robust reformulation proposed in this paper can also be extended to hydrodynamic phase field models for developing efficient structure-preserving algorithms. These aspects will be elaborated upon in detail in our subsequent work.
			
			\section*{Acknowledgements}
			This work is supported by the National Natural Science Foundation of China (Grant Nos. 12271252 and 12201297), the Natural Science Foundation of Jiangsu Province (Grant No. BK20220131), the Fundamental Research Funds for the Central Universities (Grant No. NE2024009) and the National Science and Technology Major Project (Grant No. J2019-II-0007-0027).


			\bibliographystyle{elsarticle-num} 
			\bibliography{Refs}

\begin{thebibliography}{10}
\expandafter\ifx\csname url\endcsname\relax
  \def\url#1{\texttt{#1}}\fi
\expandafter\ifx\csname urlprefix\endcsname\relax\def\urlprefix{URL }\fi
\expandafter\ifx\csname href\endcsname\relax
  \def\href#1#2{#2} \def\path#1{#1}\fi

\bibitem{deville2002high}
M.~O. Deville, P.~F. Fischer, E.~H. Mund, High-order methods for incompressible
  fluid flow, Cambridge university press, 2002.

\bibitem{gunzburger2012finite}
M.~D. Gunzburger, Finite element methods for viscous incompressible flows: a
  guide to theory, practice, and algorithms, Academic Press, 1989.

\bibitem{temam2024navier}
R.~Temam, Navier-Stokes equations: theory and numerical analysis, North
  Holland, 2016.

\bibitem{peyret2002spectral}
R.~Peyret, Spectral methods for incompressible viscous flow, Springer New York,
  2002.

\bibitem{ChorinNumerical1968}
A.~J. Chorin, Numerical solution of the {Navier-Stokes} equations, Math.
  Comput. 22 (1968) 745--762.

\bibitem{TemamARMA1969}
R.~Temam, {Sur l’approximation de la solution des équations de Navier-Stokes
  par la méthode des pas fractionnaires (ii)}, Arch. Ration. Mech. Anal. 33
  (1969) 377--385.

\bibitem{KIM1985JCP}
J.~Kim, P.~Moin, {Application of a fractional-step method to incompressible
  Navier-Stokes equations}, J. Comput. Phys. 59 (1985) 308--323.

\bibitem{GUERMONDCMAME2006}
J.~Guermond, P.~Minev, J.~Shen, An overview of projection methods for
  incompressible flows, Comput. Methods Appl. Mech. Engrg. 195 (2006)
  6011--6045.

\bibitem{ESIAM1995}
W.~E, J.~Liu, {Projection Method I: Convergence and Numerical Boundary Layers},
  SIAM J. Numer. Anal. 32 (1995) 1017--1057.

\bibitem{EIJNM2000}
W.~E, J.~Liu, Gauge finite element method for incompressible flows, Int. J.
  Numer. Methods Fluids. 34 (2000) 701--710.

\bibitem{TIMMERMANS1996}
L.~Timmermans, P.~Minev, F.~N. V.~D. Vosse, An approximate projection scheme
  for incompressible flow using spectral elements, International journal for
  numerical methods in fluids 22 (1996) 673--688.

\bibitem{Li2021MC}
X.~Li, J.~Shen, Z.~Liu, New {SAV-pressure} correction methods for the
  {Navier-Stokes} equations: stability and error analysis, Math. Comput. 91
  (2022) 141--167.

\bibitem{Shen1992SINUM}
J.~Shen, On error estimates of projection methods for {Navier-Stokes}
  equations: First-order schemes, SIAM J. Numer. Anal. (1992) 57--77.

\bibitem{Hairer2006geometric}
E.~Hairer, C.~Lubich, G.~Wanner, {Geometric Numerical Integration:
  Structure-preserving Algorithms for Ordinary differential Equations},
  Springer Berlin, Heidelberg, 2006.

\bibitem{SIMO1994}
J.~Simo, F.~Armero, Unconditional stability and long-term behavior of transient
  algorithms for the incompressible {Navier-Stokes and Euler} equations,
  Comput. Methods Appl. Mech. Engrg. 111 (1994) 111--154.

\bibitem{GONG2016JSC}
Y.~Gong, X.~Liu, Q.~Wang, Fully discretized energy stable schemes for
  hydrodynamic equations governing two-phase viscous fluid flows, J. Sci.
  Comput. 69 (2016) 921 -- 945.

\bibitem{GONG2018SIAM}
Y.~Gong, J.~Zhao, Q.~Wang, Second order fully discrete energy stable methods on
  staggered grids for hydrodynamic phase field models of binary viscous fluids,
  SIAM J. Sci. Comput. 40 (2018) B528--B553.

\bibitem{VERSTAPPEN2003JCP}
R.~Verstappen, A.~Veldman, Symmetry-preserving discretization of turbulent
  flow, J. Comput. Phys. 187 (2003) 343--368.

\bibitem{SHENJCP2018}
J.~Shen, J.~Xu, J.~Yang, The scalar auxiliary variable {(SAV)} approach for
  gradient flows, J. Comput. Phys. 353 (2018) 407--416.

\bibitem{shensiam2019}
J.~Shen, J.~Xu, J.~Yang, A new class of efficient and robust energy stable
  schemes for gradient flows, SIAM Rev. 61 (2019) 474--506.

\bibitem{LINJCP2019}
L.~Lin, Z.~Yang, S.~Dong, Numerical approximation of incompressible
  {Navier-Stokes} equations based on an auxiliary energy variable, J. Comput.
  Phys. 388 (2019) 1--22.

\bibitem{SHEN2021SIAM}
X.~Li, J.~Shen, Error analysis of the {SAV-MAC} scheme for the navier--stokes
  equations, SIAM J. Numer. Anal. 58 (2020) 2465--2491.

\bibitem{WUJCP2022}
K.~Wu, F.~Huang, J.~Shen, A new class of higher-order decoupled schemes for the
  incompressible {Navier-Stokes} equations and applications to rotating
  dynamics, J. Comput. Phys. 458 (2022) 111097.

\bibitem{ObbadiCMAME2025}
A.~Obbadi, M.~El-Amrani, M.~Seaid, D.~Yakoubi, A stable second-order splitting
  method for incompressible {Navier-Stokes} equations using the scalar
  auxiliary variable approach, Comput. Methods Appl. Mech. Engrg. 437 (2025)
  117801.

\bibitem{CHENG2020cmame}
Q.~Cheng, C.~Liu, J.~Shen, A new {Lagrange} multiplier approach for gradient
  flows, Comput. Methods Appl. Mech. Engrg. 367 (2020) 113070.

\bibitem{YANGCF2022}
J.~Yang, Z.~Tan, J.~Kim, Original variables based energy-stable time-dependent
  auxiliary variable method for the incompressible {Navier–Stokes} equation,
  Comput. \& Fluids 240 (2022) 105432.

\bibitem{doan4938899dynamically}
C.-K. Doan, T.-T.-P. Hoang, L.~Ju, R.~Lan, Dynamically regularized lagrange
  multiplier schemes with energy dissipation for the incompressible
  {Navier-Stokes} equations, J. Comput. Phys. 521 (2025) 113550.

\bibitem{YANG2020WILEY}
X.~Yang, A novel fully decoupled scheme with second-order time accuracy and
  unconditional energy stability for the {Navier–Stokes} equations coupled
  with mass-conserved {Allen-Cahn} phase-field model of two-phase
  incompressible flow, Internat. J. Numer. Methods Engrg. 122 (2021)
  1283--1306.

\bibitem{YANG2021JCP}
X.~Yang, A novel fully-decoupled, second-order time-accurate, unconditionally
  energy stable scheme for a flow-coupled volume-conserved phase-field elastic
  bending energy model, J. Comput. Phys. 432 (2021) 110015.

\bibitem{YANG2021CMAMESURF}
X.~Yang, A novel fully-decoupled, second-order and energy stable numerical
  scheme of the conserved {Allen–Cahn} type flow-coupled binary surfactant
  model, Comput. Methods Appl. Mech. Engrg. 373 (2021) 113502.

\bibitem{TC2023Hong}
Q.~Hong, Y.~Gong, J.~Zhao, Thermodynamically consistent hydrodynamic
  phase-field computational modeling for fluid-structure interaction with
  moving contact lines, J. Comput. Phys. 492 (2023) 112409.

\bibitem{Boyce2009JCP}
B.~E. Griffith, An accurate and efficient method for the incompressible
  {Navier–Stokes} equations using the projection method as a preconditioner,
  J. Comput. Phys. 228 (2009) 7565--7595.

\bibitem{JUJSC2015}
L.~Ju, J.~Zhang, L.~Zhu, Q.~Du, Fast explicit integration factor methods for
  semilinear parabolic equations, J. Sci. Comput. 62 (2015) 431--455.

\bibitem{WangJSC2020}
K.~Cheng, C.~Wang, Long time stability of high order multistep numerical
  schemes for two-dimensional incompressible {Navier--Stokes} equations, SIAM
  J. Numer. Anal. 54 (2016) 3123--3144.

\bibitem{TamerAAEJ2017}
T.~A. AbdelMigid, K.~M. Saqr, M.~A. Kotb, A.~A. Aboelfarag, Revisiting the
  lid-driven cavity flow problem: {Review} and new steady state benchmarking
  results using {GPU} accelerated code, Alex. Eng. J. 56 (2017) 123--135.

\bibitem{ALBENSOEDER2005JCP}
S.~Albensoeder, H.~Kuhlmann, Accurate three-dimensional lid-driven cavity flow,
  J. Comput. Phys. 206 (2005) 536--558.

\bibitem{JIANG1994CMAME}
B.~Jiang, T.~Lin, L.~A. Povinelli, Large-scale computation of incompressible
  viscous flow by least-squares finite element method, Comput. Methods Appl.
  Mech. Engrg. 114 (1994) 213--231.

\bibitem{Phipp2019CMA}
P.~W. Schroeder, V.~John, P.~L. Lederer, C.~Lehrenfeld, G.~Lube, J.~Schöberl,
  {On reference solutions and the sensitivity of the 2D Kelvin–Helmholtz
  instability problem}, Comput. Math. Appl. 77 (2019) 1010--1028.

\end{thebibliography}

		\end{document}